\newtheorem{thm}{Theorem}[section]
\newtheorem{lem}[thm]{Lemma}
\newtheorem{prop}[thm]{Proposition}
\newtheorem{cor}[thm]{Corollary}
\theoremstyle{definition}
\theoremstyle{remark}
\newtheorem{rmk}[thm]{Remark}
\numberwithin{equation}{section}
\begin{document}

\title[Prescribed Mean Curvature]{Conformal scalar-flat metrics with prescribed boundary mean curvature}

%    Information for first author
\author{Jiashu Shen}
%    Address of record for the research reported here
\address{School of Mathematical Sciences, Zhejiang University, Hangzhou, Zhejiang Province 310058, China}
%    Current address
\curraddr{Changshu City High School, Suzhou, Jiangsu Province 215500, China}
\email{shenjiashu0529@gmail.com}
%    \thanks will become a 1st page footnote.
%\thanks{The first author was supported in part by NSF Grant \#000000.}

%    Information for first author
\author{Hongyi Sheng}
%    Address of record for the research reported here
\address{Institute for Theoretical Sciences, Westlake Institute for Advanced Study, Westlake University, Hangzhou, Zhejiang Province 310024, China}
%    Current address
%\curraddr{Department of Mathematics and Statistics, Case Western Reserve University, Cleveland, Ohio 43403}
\email{shenghongyi@westlake.edu.cn}
%    \thanks will become a 1st page footnote.
%\thanks{The first author was supported in part by NSF Grant \#000000.}

%    General info
\subjclass[2020]{Primary 53C21, 58J32; Secondary 53A30, 35J66.}

%\date{January 1, 2001 and, in revised form, June 22, 2001.}

%\dedicatory{This paper is dedicated to our advisors.}

%\keywords{Differential geometry, algebraic geometry}

\begin{abstract}
Let $(M, g)$ be a compact Riemannian manifold with boundary $\partial M$. Given a function $f$ on $\partial M$, we consider the problem of finding a conformal metric of $g$ with zero scalar curvature in $M$ and prescribed mean curvature $f$ on $\partial M$. Through the construction of local test functions, we resolve most of the remaining open cases from Escobar's work \cite{article15} and establish new solvability conditions.
\end{abstract}

\maketitle

%\section*{This is an unnumbered first-level section head}
%This is an example of an unnumbered first-level heading.

%% The correct journal style for \specialsection is all uppercase; a known bug
%% in amsart.cls prevents this, so input must be uppercase until it is fixed.
%\specialsection*{This is a Special Section Head}
%\specialsection*{THIS IS A SPECIAL SECTION HEAD}
%This is an example of a special section head%
%%%%%%%%%%%%%%%%%%%%%%%%%%%%%%%%%%%%%%%%%%%%%%%%%%%%%%%%%%%%%%%%%%%%%%%%
%\footnote{Here is an example of a footnote. Notice that this footnote text is running on so that it can stand as an example of how a footnote with separate paragraphs should be written.
%\par
%And here is the beginning of the second paragraph.}%
%%%%%%%%%%%%%%%%%%%%%%%%%%%%%%%%%%%%%%%%%%%%%%%%%%%%%%%%%%%%%%%%%%%%%%%%

\section{Introduction}\label{sec1}
The Yamabe problem, solved by Trudinger \cite{article24}, Aubin \cite{article1}, and Schoen \cite{article21}, asserts that any Riemannian metric on a closed manifold is conformal to a metric with constant scalar curvature. There are two natural ways to extend the Yamabe problem. One extends the result to manifolds with boundary, which leads to the study of the boundary Yamabe problem. The other is to consider whether there exists a conformal metric such that the scalar curvature is a given function, which is known as the prescribed curvature problem.

For the Yamabe problem on manifolds with boundary, due to the introduction of the mean curvature on the boundary, it can be further divided into two cases. The first case is to find a conformal metric such that the scalar curvature is constant and the mean curvature is zero (the minimal boundary case); the second case is to find a conformal metric such that the scalar curvature is zero and the mean curvature is constant (the scalar-flat case). Both cases were solved by the works of Escobar \cite{article13, article14}, Brendle-Chen \cite{article4}, and Chen \cite{article5}. Escobar \cite{article13, article14} primarily studied the cases where $\partial M$ is either non-umbilic or totally umbilic with $M$ locally conformally flat, while Brendle-Chen \cite{article4} and Chen \cite{article5} introduced new test functions to solve the case where $M$ is not locally conformally flat, and reduced the remaining cases
to the positive mass theorem. Naturally, we can also study the prescribed curvature problems corresponding to these two Yamabe problems on manifolds with boundary.

More precisely, let $(M,g)$ be a compact Riemannian manifold of dimension $n\geqslant3$ with boundary $\partial M$. We use $R_{g}$ to denote the scalar curvature of $M$ and $h_{g}$ the mean curvature of $\partial M$.
Then we have the following two prescribed curvature problems:
\begin{enumerate}
	\item[(1)] For a given function $f$ on $M$, find a metric $\tilde{g}$ in the conformal class of $g$ such that $R_{\tilde{g}}=f$ and $h_{\tilde{g}}=0$.
	
	\item[(2)] For a given function $f$ on $\partial M$, find a metric $\tilde{g}$ in the conformal class of $g$ such that $R_{\tilde{g}}=0$ and $h_{\tilde{g}}=f$.
\end{enumerate}

In this paper, we focus on problem (2) and study the solvability conditions on $(M,g)$ and $f$. Let $\tilde{g}=u^{\frac{4}{n-2}}g$. Then problem (2) is equivalent to the following equations:
\begin{equation}\label{conformal eqn}
	\begin{cases}\Delta_g u-\frac{n-2}{4(n-1)} R_{g}u=0, & x\in M, \\ 
 u_\nu+\frac{n-2}{2} h_{g} u=\frac{n-2}{2} f u^{\frac{n}{n-2}},& x\in\partial M.\end{cases}
\end{equation}

For $\phi\in H^{1}(\overline{M})$, define the following energy and the Yamabe functional:
$$
\begin{aligned}
    E_g(\phi)&=\int_M\left(\frac{4(n-1)}{n-2}\left|\nabla\phi\right|_{g}^2+R_g \phi^2\right) d V_g+\int_{\partial M} 2 h_g \phi^2 d \sigma_g,\\
    \mathcal{Q}_{g}(\phi)&=\frac{E_{g}(\phi)}{\left(\int_{\partial M} \phi^{\frac{2(n-1)}{n-2}} d \sigma_g\right)^{\frac{n-2}{n-1}}},
\end{aligned}
$$
and define the Sobolev quotient by
$$
\mathcal{Q}(M, \partial M)=\inf _{0<\phi \in H^{1}(M)} \mathcal{Q}_g(\phi).
$$
Depending on whether the Sobolev quotient is negative, zero, or positive, there are different ways to deal with this problem.

When $\mathcal{Q}(M, \partial M)=0$, Escobar \cite{article15} provided a necessary and sufficient condition for the existence of a solution, using the method of upper and
lower solutions: the problem has a solution if and only if $f$ changes sign and $\int_{\partial M}f d \sigma_g<0.$

When $\mathcal{Q}(M, \partial M)<0$, Escobar \cite{article15} proved that if $f<0$, the problem has a solution. 

When $\mathcal{Q}(M, \partial M)>0$, Escobar \cite{article15} used the variational method and studied the variation of energy $E_g(\phi)$ on the constraint set
$$C_{\frac{n}{n-2},f}=\{  \phi\in H^{1}(\overline{M}):\int_{\partial M} f\left|\phi\right|^{\frac{n}{n-2}+1}d\sigma_g=1\}.$$
Let $E(\frac{n}{n-2},f)$ denote the infimum of $E_g(\phi)$ taken over $C_{\frac{n}{n-2},f}$. He proved the following criterion theorem.

\begin{thm}[Escobar \cite{article15}]\label{criterion}
	Let $(M, g)$ be a compact Riemannian manifold with boundary, with dimension $n \geqslant 3$. If $f$ satisfies 
\begin{equation}\label{criterion ineq}
    \left(\max _{x \in \partial M} f(x)\right)^{\frac{n-2}{n-1}} E\left(\frac{n}{n-2}, f\right)<\mathcal{Q}(B^{n},\partial B^{n}),
\end{equation}
then there is a smooth positive solution to (\ref{conformal eqn}). Here $B^{n}$ is the unit $n$-ball.
\end{thm}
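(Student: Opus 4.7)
The plan is to show that the infimum $E(n/(n-2),f)$ is attained by a non-negative minimizer $\phi$, whose Euler--Lagrange equation (after a positive rescaling) is exactly (\ref{conformal eqn}). Since (\ref{criterion ineq}) forces $\max_{\partial M} f > 0$, the constraint set $C_{n/(n-2),f}$ is non-empty. Choose a minimizing sequence $\phi_k \in C_{n/(n-2),f}$ and, replacing $\phi_k$ by $|\phi_k|$, assume $\phi_k \geqslant 0$. In the positive case $\mathcal{Q}(M,\partial M)>0$ under consideration, a standard Sobolev-trace argument shows that $E_g$ is coercive on $H^1(M)$, so $\{\phi_k\}$ is bounded there; extracting a subsequence, $\phi_k \rightharpoonup \phi$ weakly in $H^1(M)$, strongly in $L^q(\partial M)$ for every $q < 2(n-1)/(n-2)$, and almost everywhere on $\partial M$. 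The only obstruction to $\phi \in C_{n/(n-2),f}$ being a minimizer is loss of compactness in the critical trace embedding $H^1(M) \hookrightarrow L^{2(n-1)/(n-2)}(\partial M)$.

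The heart of the proof is to use the strict inequality (\ref{criterion ineq}) to rule out concentration. I would argue by contradiction: if strong convergence fails in $L^{2(n-1)/(n-2)}(\partial M)$, a boundary concentration-compactness argument applied to the measures $f\phi_k^{2(n-1)/(n-2)}\, d\sigma_g$ produces, after extraction, a point $x_0 \in \partial M$ at which a positive fraction of the total mass concentrates (in particular $f(x_0)>0$). In Fermi coordinates centered at $x_0$ that flatten $\partial M$, the rescaling $\tilde\phi_k(y) = \lambda_k^{(n-2)/2}\phi_k(\lambda_k y)$, with $\lambda_k \to 0$ matched to the concentration scale, produces a sequence bounded in $H^1_{\mathrm{loc}}(\overline{\mathbb{R}^n_+})$ that converges weakly to a non-trivial limit $U$ solving $\Delta U = 0$ in $\mathbb{R}^n_+$ with $U_\nu = c\, U^{n/(n-2)}$ on $\{x_n=0\}$ for some $c>0$. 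Because $\mathcal{Q}(B^n,\partial B^n)$ equals the sharp trace Sobolev constant on the half-space (via the conformal equivalence $\mathbb{R}^n_+ \simeq B^n$), this gives the energy lower bound
$$
E(n/(n-2), f) \;\geqslant\; \mathcal{Q}(B^n,\partial B^n)\, f(x_0)^{-(n-2)/(n-1)} \;\geqslant\; \mathcal{Q}(B^n,\partial B^n)\, \bigl(\max_{\partial M} f\bigr)^{-(n-2)/(n-1)},
$$
contradicting (\ref{criterion ineq}). Hence concentration cannot occur, $\phi_k \to \phi$ strongly in $L^{2(n-1)/(n-2)}(\partial M)$, and $\phi \in C_{n/(n-2),f}$ attains $E(n/(n-2),f)$.

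The Euler--Lagrange equation for $\phi$ subject to the constraint is $\Delta \phi - \tfrac{n-2}{4(n-1)} R_g \phi = 0$ in $M$ and $\phi_\nu + \tfrac{n-2}{2} h_g \phi = \mu f \phi^{n/(n-2)}$ on $\partial M$ for some Lagrange multiplier $\mu > 0$; a positive rescaling of $\phi$ converts this to (\ref{conformal eqn}). Elliptic bootstrap (interior $W^{2,p}$ theory combined with Schauder estimates for the nonlinear oblique boundary condition, iterated) then yields $\phi \in C^\infty(\overline M)$, and the strong maximum principle in $M$ together with the Hopf boundary-point lemma (which would force $\phi_\nu(x_0)<0$ at any boundary zero, in contradiction with the boundary equation) gives $\phi > 0$ on $\overline M$. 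The \emph{main obstacle} is the sharp blow-up analysis in the second paragraph: identifying the correct concentration scale $\lambda_k$, controlling the error terms generated by the ambient curvature $R_g$ and by the non-flatness of $\partial M$ in the Fermi expansion, and ensuring that the resulting energy lower bound saturates the best constant $\mathcal{Q}(B^n,\partial B^n)$ (rather than being strictly smaller, which would fail to contradict (\ref{criterion ineq})). This is the classical but delicate computation carried out by Escobar for the constant prescribed mean curvature problem, and only mild modifications are needed here to absorb the non-constant factor $f$ near $x_0$.
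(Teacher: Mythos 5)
This theorem is not proved in the paper under review: it is quoted from Escobar \cite{article15} and used as a black box via the remark after Theorem~\ref{n=5, nonumbilic}, so there is no in-paper argument to compare against. Your sketch instead reconstructs a proof, and the route you take — direct minimization of $E_g$ on the critical constraint set $C_{\frac{n}{n-2},f}$, followed by a boundary concentration-compactness/blow-up argument in which the strict inequality (\ref{criterion ineq}) rules out concentration, and concluding with elliptic regularity and the Hopf lemma — is sound and reaches the correct conclusion. Escobar's own proof goes by \emph{subcritical approximation}: minimize $E_g$ over $C_{q,f}$ for subcritical exponents $q<\frac{n}{n-2}$ (where the trace embedding is compact so minimizers $u_q$ exist and solve a subcritical analogue of (\ref{conformal eqn})), establish uniform $L^\infty$ bounds on $u_q$ using (\ref{criterion ineq}), and pass to the limit $q\uparrow\frac{n}{n-2}$; the same strict inequality is what prevents blow-up of the $u_q$. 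The two approaches hinge on the identical mechanism but package it differently. The subcritical route buys you that at every stage you are working with genuine positive solutions of an Euler--Lagrange system, which simplifies both the positivity argument and the blow-up analysis (one can use pointwise sup estimates rather than measure-theoretic concentration-compactness), at the cost of an extra limiting step; your direct route is conceptually leaner but must handle a technical wrinkle you gloss over — the measures $f\phi_k^{2(n-1)/(n-2)}\,d\sigma_g$ are signed, so one should run the concentration-compactness lemma on $\phi_k^{2(n-1)/(n-2)}\,d\sigma_g$ and separately argue (using the constraint and the concavity of $t\mapsto t^{(n-2)/(n-1)}$) that any concentration atom must sit where $f>0$ and carry enough mass to produce the lower bound $E\geqslant \mathcal{Q}(B^n,\partial B^n)(\max f)^{-(n-2)/(n-1)}$, which contradicts (\ref{criterion ineq}). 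You also implicitly assume $\mathcal{Q}(M,\partial M)>0$ for coercivity; this matches the context in which the theorem is applied, and Escobar makes the same reduction (via the conformal normalization $R_{g_0}>0$, $h_{g_0}=0$).
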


In the following cases when $(M, g)$ has positive Sobolev quotient, Escobar \cite{article15} proved the inequality (\ref{criterion ineq}) is satisfied, and thus by the above criterion, problem (\ref{conformal eqn}) has a smooth positive solution.

\begin{enumerate}	
	\item[(1)]$n=3$, $M$ is not conformally equivalent to the ball, with any function $f$ which is somewhere positive.
	
	\item[(2)]$n=4,5$, $\partial M$ is umbilic and $M$ is not conformally equivalent to the ball, with $f$ satisfying condition (\ref{*}).
	
	\item [(3)]$n\geqslant 6$, $\partial M$ is umbilic, $M$ is locally conformally flat, and $M$ is not conformally equivalent to the ball, with $f$ satisfying condition (\ref{*}).

     \item [(4)]$n\geqslant 6$, $\partial M$ is not umbilic, with $f$ satisfying condition (\ref{**}).
\end{enumerate}

Here condition (\ref{*}) on $f$ means:
\begin{equation}\label{*}
    \begin{cases}
        f \text{ is somewhere positive and achieves a global maximum}\\
        \text{at }p\in\partial M\text{ where }\nabla^k f(p)=0\text{ for }k=1, \cdots, n-2.
    \end{cases}
\end{equation}

\begin{rmk}
    Note that condition (\ref{*}) is automatically satisfied for any smooth function (which is somewhere positive) defined on the boundary on a $3$-dimensional manifold.
\end{rmk}

Meanwhile, condition (\ref{**}) on $f$ means:
\begin{equation}\label{**}
    \begin{cases}
        f \text{ is somewhere positive and achieves a global}\\
        \text{maximum at a non-umbilic point }p\in\partial M\text{ where }\\
        \Delta_{\partial M} f(p)\leqslant c(n)\|\pi_{g}-h_{g} g\|^2(p).
    \end{cases}
\end{equation}
Here $\pi_{g}$ is the second fundamental form of $\partial M$, and $c(n)$ is a suitable positive constant that depends only on the dimension of $M$.

\bigskip
Inspired by the ideas in \cite{article27, article4, article5}, we handle the remaining cases of this problem when $\mathcal{Q}(M, \partial M)>0$, namely

\begin{itemize}
    \item $n=4,5$, and $\partial M$ is not umbilic;
    \item $n\geqslant 6$, $\partial M$ is umbilic and $M$ is not locally conformally flat.
\end{itemize}

First, for $n\geqslant 5$ and non-umbilic $\partial M$, we have the following result.

\begin{thm}\label{n=5, nonumbilic}
Let $(M, g)$ be a compact Riemannian manifold with boundary, with dimension $n \geqslant 5$, and $\mathcal{Q}(M,\partial M)>0$. If $\partial M$ is not umbilic and $f$ satisfies condition (\ref{**}), then problem (\ref{conformal eqn}) has a smooth positive solution.
\end{thm}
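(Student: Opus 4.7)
The plan is to verify the hypothesis (\ref{criterion ineq}) of Escobar's criterion (Theorem \ref{criterion}) by constructing a family of test functions concentrating near the global maximum point $p\in\partial M$ whose Yamabe-type ratio drops strictly below $\mathcal{Q}(B^n,\partial B^n)$ after normalization by $\max_{\partial M} f$. I work in Fermi coordinates $(x',x_n)$ centered at $p$ so that $\partial M$ is locally $\{x_n=0\}$, and use as the backbone of the test function the standard half-space bubble
$$U_\epsilon(x',x_n)=\left(\frac{\epsilon}{(\epsilon+x_n)^2+|x'|^2}\right)^{\frac{n-2}{2}},$$
which realizes $\mathcal{Q}(B^n,\partial B^n)$ on flat Euclidean half-space. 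The remaining task is to perturb $U_\epsilon$ and quantify how the curvature of $(M,g)$ and the shape of $f$ near $p$ modify the ratio.

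Following the strategy of Marques, Brendle-Chen and Chen in \cite{article27, article4, article5}, I take the ansatz $\phi_\epsilon = \eta\bigl(U_\epsilon+\psi_\epsilon\bigr)$, where $\eta$ is a smooth cutoff localizing in a fixed neighborhood of $p$ and $\psi_\epsilon$ is a correction term chosen to kill the leading cross integrals produced by the trace-free part $\pi_g - h_g g$ of the second fundamental form. Concretely, $\psi_\epsilon$ solves an explicit linear elliptic equation on the half-space whose right-hand side is built from $(\pi_g - h_g g)(p)$ and the derivatives of $U_\epsilon$; standard bubble analysis supplies the requisite decay and pointwise bounds that make all subsequent integrations by parts legitimate.

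With the ansatz fixed, I Taylor-expand the metric in Fermi coordinates and compute both $E_g(\phi_\epsilon)$ and $\int_{\partial M} f|\phi_\epsilon|^{\frac{2(n-1)}{n-2}}\,d\sigma_g$. Since $\nabla f(p)=0$, the only new ingredient from $f$ at the relevant order is $\Delta f(p)$, while the boundary geometry contributes a strictly negative term proportional to $-\|\pi_g-h_g g\|^2(p)$ at the same order. Combining the two expansions yields
$$\bigl(\max_{\partial M} f\bigr)^{\frac{n-2}{n-1}}\mathcal{Q}_g(\phi_\epsilon)=\mathcal{Q}(B^n,\partial B^n)\Bigl(1-C\epsilon^{\alpha}\bigl(c(n)\|\pi_g-h_g g\|^2(p)-\Delta f(p)\bigr)+o(\epsilon^{\alpha})\Bigr),$$
where $\alpha=2$ for $n\geqslant 6$ and an extra $\log(1/\epsilon)$ factor appears at the borderline dimension $n=5$. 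Assumption (\ref{**}) makes the coefficient strictly positive, so (\ref{criterion ineq}) holds for all sufficiently small $\epsilon$, and Theorem \ref{criterion} then supplies a smooth positive solution of (\ref{conformal eqn}).

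The principal technical obstacle is the borderline expansion at order $\epsilon^2$ (respectively $\epsilon^2\log(1/\epsilon)$ when $n=5$): several terms from $R_g$, $h_g$, and the coordinate-Laplacian corrections contribute at exactly this order, and naive estimates either diverge or fail to be sharp enough to close against the $\Delta f(p)$ term. The correction $\psi_\epsilon$ must be calibrated so as to cancel these problematic cross-terms, leaving behind only the sharply negative $\|\pi_g-h_g g\|^2(p)$ contribution; in dimension $n=5$ the calibration requires additional care because of the logarithmic divergence of the relevant bubble integrals on the half-space. Establishing this cancellation, and tracking the explicit constant $c(n)$ appearing in (\ref{**}) so that the inequality becomes strict, is the heart of the proof and is where the test-function technology from \cite{article27, article4, article5} has to be adapted from the constant-mean-curvature Yamabe problem to the present prescribed-$f$ setting.
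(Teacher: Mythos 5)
Your overall strategy is the same as the paper's: work in Fermi coordinates centered at the global maximum $p$ of $f$, take the bubble $v_\varepsilon$ plus the Brendle--Chen/Almaraz correction $\psi$ as the local test function, Taylor-expand the metric and $f$ near $p$, and show the resulting energy ratio drops strictly below $\mathcal{Q}(B^n,\partial B^n)$ so that Escobar's criterion (Theorem~\ref{criterion}) applies. The role of $\Delta f(p)$ versus $\|\pi_g-h_gg\|^2(p)$ in the coefficient at order $\varepsilon^2$ is also correctly identified. However, you have the borderline dimension wrong, and this is not a cosmetic slip.

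You claim a $\log(1/\varepsilon)$ factor appears at $n=5$ and that $n=5$ is the case where ``the calibration requires additional care because of the logarithmic divergence of the relevant bubble integrals.'' In fact, for the non-umbilic boundary problem the key bubble integral is
$$\varepsilon^{n-2}\int_{B_\delta\cap\mathbb{R}^n_+}(\varepsilon+|x|)^{4-2n}\,dx
=\varepsilon^2\,\omega_{n-1}\int_0^{\delta/\varepsilon}\frac{r^{n-1}}{(1+r)^{2n-4}}\,dr,$$
and the integrand decays like $r^{3-n}$, so the integral converges precisely when $n>4$ and diverges logarithmically at $n=4$. Likewise the $\Delta f(p)$-weight $D(n)=\frac{\omega_{n-2}}{2(n-1)}\int_0^\infty\frac{r^n}{(1+r^2)^{n-1}}\,dr$ is finite for all $n\geqslant4$, and the error integrals (the paper's $E_2,E_3$) are finite for $n>4$. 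Thus all of $n=5,6,7,\dots$ are handled uniformly by a single $O(\varepsilon^2)$ expansion with bounded constants $B(n),D(n)$, and the condition (\ref{**}) on $\Delta f(p)$ is exactly what makes the $\varepsilon^2$ coefficient negative. The logarithmic case is $n=4$, which the paper treats separately (Theorem~\ref{n=4, nonumbilic}): there the $-C\theta|\pi_g(p)|^2\varepsilon^2\log(\delta/\varepsilon)$ term dominates every other $O(\varepsilon^2)$ contribution, so no hypothesis on $\Delta f(p)$ is needed at all. Your proposal would both misplace where the extra care is needed and miss the fact that in the true logarithmic dimension the result is actually cleaner, not harder. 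One smaller point: the quantity to estimate is not $(\max f)^{(n-2)/(n-1)}\mathcal{Q}_g(\phi_\varepsilon)$ (which normalizes by $\int\phi_\varepsilon^{2(n-1)/(n-2)}$) but $E_g(\phi_\varepsilon)\big/\bigl(\int_{\partial M}f\phi_\varepsilon^{2(n-1)/(n-2)}\,d\sigma_g\bigr)^{(n-2)/(n-1)}$; the replacement of $\int\phi^{2(n-1)/(n-2)}$ by $\int f\phi^{2(n-1)/(n-2)}$ and the estimate of their difference (via Lemma~\ref{f est}) is precisely where the $\Delta f(p)$ term enters, and this step should be made explicit rather than folded into the notation $\mathcal{Q}_g$.
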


For $4$-dimensional Riemannian manifolds with non-umbilic boundary, we can solve problem (\ref{conformal eqn}) for a wider range of functions $f$.

\begin{thm}\label{n=4, nonumbilic}
Let $(M^4, g)$ be a $4$-dimensional compact Riemannian manifold with boundary, and $\mathcal{Q}(M,\partial M)>0$. If $\partial M$ is not umbilic, and $f$ is somewhere positive and achieves a global maximum at a non-umbilic point, then problem (\ref{conformal eqn}) has a smooth positive solution.
\end{thm}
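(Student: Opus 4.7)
The plan is to apply Escobar's criterion, Theorem~\ref{criterion}, by constructing a one-parameter family of test functions $\phi_\epsilon$ concentrating at the global maximum point $p$ of $f$, and verifying (\ref{criterion ineq}) for all sufficiently small $\epsilon > 0$. Because $\phi_\epsilon$ concentrates at $p$ and $\nabla f(p) = 0$, the factor $(\max f)^{(n-2)/(n-1)}$ on the left of (\ref{criterion ineq}) can be absorbed into the denominator, and the task reduces to showing that the Yamabe quotient $\mathcal{Q}_g(\phi_\epsilon)$ is strictly smaller than $\mathcal{Q}(B^n,\partial B^n)$ by an amount that beats the error coming from $f(p)-f$.

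Working in Fermi coordinates centered at $p$, where
\[
g_{ij}(x',x_n) = \delta_{ij} + 2\pi_{ij}(p)\, x_n + O(|x|^2),
\]
I would take
\[
\phi_\epsilon = \chi \cdot \bigl(U_\epsilon + \epsilon\, w_\epsilon\bigr), \qquad U_\epsilon(x) = \Bigl(\tfrac{\epsilon}{(x_n+\epsilon)^2 + |x'|^2}\Bigr)^{(n-2)/2},
\]
with a fixed cutoff $\chi$ supported near $p$ and an auxiliary function $w_\epsilon$ chosen, in the spirit of Brendle-Chen~\cite{article4} and Chen~\cite{article5}, to kill the first-order-in-$\pi$ error from the metric expansion. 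Concretely $w_\epsilon$ solves a linear Neumann problem on the half-space whose source term involves $\pi_{ij}(p)\,x_n\,\partial_i U_\epsilon$; its role is to convert what would otherwise be a linear-in-$\pi$ contribution into a quadratic-in-$\pi$ one so that the non-umbilic invariant $\|\pi_g - h_g g\|^2(p)$ appears with a definite sign.

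A term-by-term expansion of
\[
E_g(\phi_\epsilon)=\int_M\!\Bigl(\tfrac{4(n-1)}{n-2}|\nabla \phi_\epsilon|_g^2 + R_g\,\phi_\epsilon^2\Bigr)dV_g + \int_{\partial M} 2h_g\,\phi_\epsilon^2\,d\sigma_g,
\]
using the metric expansion and the defining equation for $w_\epsilon$, should yield in $n=4$
\[
E_g(\phi_\epsilon) = E_{g_{\mathrm{eucl}}}(U_\epsilon)- A\,\|\pi_g - h_g g\|^2(p)\,\epsilon^2 \log(1/\epsilon) + O(\epsilon^2),
\]
with a universal constant $A>0$. The logarithm arises because, after the corrector absorbs the linear terms, the remaining quadratic-in-$\pi$ integrand sits exactly on the borderline of integrability in dimension four. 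On the denominator side, Taylor expanding $f$ to second order around $p$ and using that $p$ is a critical point gives
\[
\int_{\partial M} f\,\phi_\epsilon^{\frac{2(n-1)}{n-2}}\,d\sigma_g = f(p)\int_{\partial M} \phi_\epsilon^{\frac{2(n-1)}{n-2}}d\sigma_g - B\,\Delta f(p)\,\epsilon^2 + o(\epsilon^2).
\]
Combining the two expansions, one obtains
\[
(\max f)^{(n-2)/(n-1)}\,\mathcal{Q}_g(\phi_\epsilon)\cdot\bigl(\text{renormalization}\bigr) \leq \mathcal{Q}(B^n,\partial B^n) - A'\,\|\pi_g - h_g g\|^2(p)\,\epsilon^2\log(1/\epsilon) + O(\epsilon^2).
\]
Since $\|\pi_g - h_g g\|^2(p) > 0$ by the non-umbilicity of $p$, the negative $\epsilon^2\log(1/\epsilon)$ term dominates every $O(\epsilon^2)$ contribution for $\epsilon$ small, \emph{regardless of the sign or size of $\Delta f(p)$}. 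This is exactly what allows the Laplacian hypothesis in condition (\ref{**}) to be dropped in dimension four.

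The main obstacle is the precise $n=4$ energy expansion: one must specify the corrector $w_\epsilon$ correctly, carefully track the borderline integrals that produce $\log(1/\epsilon)$, and confirm that the coefficient is exactly $-A\|\pi_g - h_g g\|^2(p)$ with $A>0$. Every other piece - the harmless $R_g$ and $h_g$ contributions, the interaction of $w_\epsilon$ with the prescribed mean-curvature Neumann condition, and the $f$-weighted boundary denominator - is either classical or a perturbation of Escobar's original calculation; the low-dimensional borderline analysis is where the novelty and the technical bulk lie.
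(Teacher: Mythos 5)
Your proposal follows essentially the same strategy as the paper: construct a local test function $\eta_\delta(v_\varepsilon+\psi)$ with the Brendle--Chen/Almaraz corrector, feed the resulting energy estimate into Escobar's criterion, and observe that in $n=4$ the non-umbilic contribution produces a borderline integral $\varepsilon^2\int_0^{\delta/\varepsilon}r^3(1+r)^{-4}dr\sim\varepsilon^2\log(\delta/\varepsilon)$ that dominates the $O(\varepsilon^2)$ term from Taylor-expanding $f$ at its critical max, so the Laplacian hypothesis of condition (\ref{**}) is unnecessary. Your corrector $w_\varepsilon$, characterized as the solution of a half-space Neumann problem with source $\pi_{ij}(p)x_n\partial_i U_\varepsilon$, is the same object as the $\psi$ built from Almaraz's vector field $V$ in (\ref{psi def}); and where the paper simply absorbs the $f$-deficit into $O(\varepsilon^2)$ (via $|1-f(x)|\leqslant C|x|^2$), you track the $\Delta f(p)$ coefficient explicitly, which is harmless extra information in $n=4$ since its sign never matters against the $\log$ term. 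The skeleton is sound; what remains to make it a proof is exactly what the paper imports from Almaraz's Propositions 3.8 and (3.23) --- the verified quadratic-in-$H$ lower bound with a definite constant $\theta$ --- rather than the heuristic ``should yield'' step in your sketch.
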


Then for $n\geqslant6$ with umbilical boundary $\partial M$, we follow Brendle–Chen \cite{article4} and employ the positive mass theorem in higher dimensions (see, for example, \cite{article23}) to remove the requirement that the manifold be locally conformally flat.

\begin{thm}\label{umbilic}
Let $(M, g)$ be a compact Riemannian manifold with boundary, with dimension $n \geqslant 6$, and $\mathcal{Q}(M,\partial M)>0$. If $\partial M$ is umbilic, $M$ is not conformally equivalent to the ball, and $f$ satisfies condition (\ref{*}), then problem (\ref{conformal eqn}) has a smooth positive solution.
\end{thm}

Combined with the above existence results of Escobar \cite{article15}, we can now solve problem (\ref{conformal eqn}) under the following conditions.

\begin{cor}
Let $(M, g)$ be a compact Riemannian manifold with boundary, with dimension $n \geqslant 3$, and $\mathcal{Q}(M,\partial M)>0$. Then problem (\ref{conformal eqn}) has a smooth positive solution, provided that one of the following conditions is satisfied.

    \begin{enumerate}	
    \item[(1)]$n=3$, $M$ is not conformally equivalent to the ball, with any function $f$ which is somewhere positive.
    
    \item [(2)]$n=4$, $\partial M$ is not umbilic, with any function $f$ which is somewhere positive and achieves a global maximum at a non-umbilic point.

     \item [(3)]$n\geqslant 5$, $\partial M$ is not umbilic, with $f$ satisfying condition (\ref{**}).
	
	\item [(4)]$n\geqslant 4$, $\partial M$ is umbilic, $M$ is not conformally equivalent to the ball, with $f$ satisfying condition (\ref{*}).
\end{enumerate}
\end{cor}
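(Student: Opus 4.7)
The plan is to deduce this corollary by a direct case-by-case matching: each hypothesis listed under (1)--(4) is precisely the hypothesis of either an existence theorem proved by Escobar \cite{article15} and recalled in the introduction or one of the new theorems (Theorems \ref{n=5, nonumbilic}, \ref{n=4, nonumbilic}, \ref{umbilic}) established in this paper, and the output in each case is a smooth positive solution to (\ref{conformal eqn}). Hence the only work is to verify the correspondence, not to produce any new analysis.

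Specifically, I would argue as follows. Case (1), the three-dimensional case, is identical to case (1) in Escobar's list of resolved situations recalled after Theorem \ref{criterion}, so the conclusion is immediate. Case (2) matches exactly the hypothesis of Theorem \ref{n=4, nonumbilic}, and case (3) matches exactly the hypothesis of Theorem \ref{n=5, nonumbilic}; in both, invoking the theorem concludes the argument. Case (4) requires one subdivision by dimension: for $n = 4, 5$ with $\partial M$ umbilic, $M$ not conformally equivalent to the ball, and $f$ satisfying (\ref{*}), this is Escobar's case (2); for $n \geq 6$ with $\partial M$ umbilic and $f$ satisfying (\ref{*}), Theorem \ref{umbilic} applies and, crucially, removes the locally conformally flat assumption that was required in Escobar's case (3).

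No step poses a genuine obstacle, since the substantive analytic work---construction of local test functions, sharp energy expansions, and (for $n \geq 6$) invocation of the positive mass theorem---has already been carried out in the proofs of Theorems \ref{n=5, nonumbilic}, \ref{n=4, nonumbilic}, and \ref{umbilic}. The corollary serves as a unified statement of the solvability picture for $\mathcal{Q}(M, \partial M) > 0$, and its proof amounts to this dispatch.
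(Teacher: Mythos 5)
Your proposal is correct and matches the paper's (implicit) argument: the corollary is deduced by a case-by-case dispatch to Escobar's resolved cases and to Theorems \ref{n=5, nonumbilic}, \ref{n=4, nonumbilic}, and \ref{umbilic}, with case (4) split into $n=4,5$ (Escobar) and $n\geqslant 6$ (Theorem \ref{umbilic}, which via the positive mass theorem removes the locally conformally flat hypothesis).
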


Therefore, we have resolved most of the remaining cases left in \cite{article15}.

Note when $\partial M$ is umbilic, there is an additional assumption that $M$ is not conformally equivalent to the ball $B^n$. This is because the globally constructed test function makes use of the Green's function for the conformal Laplacian, and the validity of (\ref{criterion ineq}) depends on a certain parameter of the Green's function being strictly positive, which in turn relies on the positive mass theorem \cite{article23} for asymptotically flat manifolds. Indeed, this assumption can be slightly weakened. See Section \ref{sec5} for more details.

On the other hand, the prescribed mean curvature problem on $B^n$ is quite challenging. However, there are several results in this direction; see, for example, Escobar \cite{article15}, Chang-Xu-Yang \cite{article7}, Xu-Zhang \cite{article25}, Escobar-Garcia \cite{article17}, and Ahmedou-Djadli-Malchiodi \cite{article10}.

The paper is organized as follows. In Section \ref{sec2}, we define basic notation and introduce some algebraic preliminaries. In Section \ref{sec3}, we construct test functions and derive useful estimates. In Section \ref{sec4}, we focus on $n=4,5$, and manifolds with non-umbilic boundary, where we prove Theorem \ref{n=5, nonumbilic} and Theorem \ref{n=4, nonumbilic}. In Section \ref{sec5}, we discuss the case of $n\geqslant 6$ and manifolds with umbilic boundary, where we prove Theorem \ref{umbilic}. In particular, we make use of the positive mass theorem \cite{article23} to remove the
assumption in \cite{article15} that the manifold has to be locally conformally flat.

\section{Preliminaries}\label{sec2}
Let $(M, g_0)$ be a compact Riemannian manifold with boundary, with dimension $n \geqslant 3$. For a Riemannian metric $g$, $\nu$ will denote the outward unit normal vector to $\partial M$ with respect to $g$, $\pi_{g}$ the second fundamental form of $\partial M$, and $\Delta_{\partial M}$ the Laplace-Beltrami operator on $\partial M$. Furthermore, $\Delta_{g}$, $R_{g}$, and $h_{g}$ will denote the Laplace-Beltrami operator on $M$, the scalar curvature of $M$, and the mean curvature of $\partial M$, respectively. We use $dV_{g}$ and $d\sigma_{g}$ to denote the volume form of $M$ and $\partial M$, and use $dx$ and $d\sigma$ to denote the volume form of $\mathbb{R}_+^{n}$ and $\partial\mathbb{R}_+^{n}$.

We use $B_{\delta}(x)$ to denote the ball in $\mathbb{R}^{n}$ of radius $\delta$ with center $x\in\mathbb{R}^{n}$ and  $\Omega_{\delta}(p)$ the coordinate ball in $M$ of radius $\delta$ with center $p\in M$. We use $\omega_{n-1}$ to denote the surface area of the Euclidean unit sphere $\mathbb{S}^{n-1}\subset\mathbb{R}^{n}$.

Since the Sobolev quotient $\mathcal{Q}(M,\partial M)>0$, we may assume $R_{g_0}>0$ and $h_{g_0}=0$ up to a conformal change of the metric \cite{article14}*{Lemma 1.1}. 

Fix $p\in\partial M$, and let $(x_1, \ldots, x_n)$ denote the Fermi coordinates around $p$. (Note here we do not assume $p$ is an umbilic point of $\partial M$.) More specifically, for small $\delta>0$ there exists a smooth map $\varphi:B_{\delta}(0)\cap \mathbb{R}^{n}_{+} \to M$ such that $\varphi(0)=p$. We blur the distinction between $f(\varphi(x))$ and $f(x)$, and use $f(x)$ to denote both of them to simplify our notation. There are some important properties of the metric $g$ in these coordinates, where $i, k$ run from $1$ to $n$ and $a,b,c$ run from $1$ to $n-1$:

\begin{equation*}
    \begin{cases}
        g_{ik}(0)=\delta_{ik},&\\
        g_{in}(x)=\delta_{in},& \text{for }x\in B_{\delta}(0)\cap \mathbb{R}^{n}_{+},\\
        \partial_{a}g_{bc}(0)=0,&\\
        \sum_{a=1}^{n-1}x_{a}g_{ab}(x)=x_b,& \text{for }x\in B_{\delta}(0)\cap \partial \mathbb{R}^{n}_{+}.\\
    \end{cases}
\end{equation*}

If we write $g=\exp(h)$ where exp denotes
the matrix exponential, then $h$ is a symmetric $2$-tensor and satisfies the following properties \cite{article27}:

\begin{equation*}
    \begin{cases}
        h_{ik}(0)=0,&\\
        h_{in}(x)=0,& \text{for }x\in B_{\delta}(0)\cap \mathbb{R}^{n}_{+},\\
        \partial_{a}h_{bc}(0)=0,&\\
        \sum_{a=1}^{n-1}x_{a}h_{ab}(x)=0,& \text{for }x\in B_{\delta}(0)\cap \partial \mathbb{R}^{n}_{+}.\\
    \end{cases}
\end{equation*}

Recall that we assumed $R_{g_0}>0$ and $h_{g_0}=0$. Then by Marques \cite{article19}*{Proposition 3.1}, there exists a conformal metric $g=u^{\frac4{n-2}}g_0$ with 
$$u(p)=1, \text{  and  }\operatorname{det}g(x)=1+O(|x|^{2d+2})\,\,\text{ near }p$$ 
where $d=\left[\frac{n-2}{2}\right]$. 
This determinant condition implies that the mean curvature of $\partial M$ with respect to $g$ satisfies
$$h_{g}(x)=-\frac{1}{2} g^{ab} \partial_n g_{ab}(x)=-\frac{1}{2} \partial_n\left(\log \operatorname{det}g\right)(x)=O\left(|x|^{2 d+1}\right).$$
Moreover, if $g=\exp(h)$, then $\operatorname{tr}(h)(x) = O(|x|^{2d+2})$.

Let
$$H_{ik}(x)\triangleq \sum_{|\alpha|=1}^{d}h_{ik,\alpha}x^{\alpha}$$
denote the Taylor expansion of order $d$ associated with the function $h_{ik}(x)$, where $\alpha$ is a multi-index. So $h_{ik}(x)$ can be written as $h_{ik}(x) =
H_{ik}(x) + O\left(|x|^{d+1}\right)$. Then $H$ is a symmetric trace-free 2-tensor on $\mathbb{R}^{n}_{+}$ satisfying
\begin{equation*}
    \begin{cases}
        H_{ik}(0)=0,\\
        \partial_{a}H_{bc}(0)=0,\\
        H_{in}(x)=0,& x\in\mathbb{R}^{n}_{+},\\
        \sum_{a=1}^{n-1}x_{a}H_{ab}(x)=0,& x\in\partial \mathbb{R}^{n}_{+}.\\
    \end{cases}
\end{equation*}

%We point out that the difference with Chen\cite{article5} is that we don't assume $p$ an umbilic point of boundary here. 
Consider the case where $|\alpha|=1$. For $\alpha=(0,0,\cdots,1,\cdots,0)$ with $1$ in the $j$-th position, $h_{ik,\alpha}=\partial_{j}h_{ik}(0)=\partial_{j}g_{ik}(0).$ So 
$$
h_{ik,\alpha}= \begin{cases}0, & \text { if } k=n \text { or }j\neq n, \\ -{2}(\pi_{g})_{ik}(0), & \text { if } i, k=1, \ldots,n-1\text { and } j=n.\end{cases}
$$
%When $p$ is an umbilic point, the second fundamental form vanishes at $p$ since we assumed $h_{g}=0$, we have $h_{ik,\alpha}=0$ for all $i,k=1,\cdots,n$ and $|\alpha|=1.$ In this case,the taylor expansion of $h$ starts from the second order derivative.  
Thus, we have
\begin{equation}\label{h=pi}
\sum_{|\alpha|=1}\sum_{i,k=1}^{n}|h_{ik,\alpha}|^{2}={4}\|\pi_{g}(0)\|^{2}.
\end{equation}

We then define the algebraic Schouten tensor and Weyl tensor as in \cite{article3}:
$$
\begin{aligned}
A_{i j} & =\partial_i \partial_m H_{m j}+\partial_m \partial_j H_{i m}-\Delta_g H_{i j}-\frac{1}{n-1} \partial_m \partial_p H_{m p} \delta_{i j},\\
    Z_{i j k l} & =\partial_i \partial_k H_{j l}-\partial_i \partial_l H_{j k}-\partial_j \partial_k H_{i l}+\partial_j \partial_l H_{i k}\\
    & \quad +\frac{1}{n-2}\left(A_{j l} \delta_{i k}-A_{j k} \delta_{i l}-A_{i l} \delta_{j k}+A_{i k} \delta_{j l}\right) .
\end{aligned}
$$

\begin{prop}[\cite{article27, article4}]\label{vanishing  H}
    If the tensor $H$ satisfies
    $$
\begin{cases}Z_{ijkl}=0, & \text { in } \mathbb{R}_{+}^n, \\ \partial_n H_{i j}=0, & \text { on } \partial \mathbb{R}_{+}^n,\end{cases}
$$
then $H=0$ in $ \mathbb{R}_{+}^n.$
\end{prop}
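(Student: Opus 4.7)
My strategy is to decompose $H$ into its homogeneous polynomial components $H = \sum_{m=1}^{d} H^{(m)}$ and show $H^{(m)} \equiv 0$ for each $m$ separately. All the constraints imposed on $H$---the trace-free property, the vanishings $H(0)=0$ and $\partial_a H_{bc}(0) = 0$, the ambient identity $H_{in} \equiv 0$, the radial boundary identity $\sum_a x_a H_{ab} = 0$ at $x_n = 0$, and the two hypotheses $Z_{ijkl} = 0$ in $\mathbb{R}^n_+$ and $\partial_n H_{ij} = 0$ on $\partial \mathbb{R}^n_+$---are linear and homogeneous in $H$, and both $A$ and $Z$ are constant-coefficient second-order differential operators. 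So the full system preserves the space of homogeneous polynomials of each fixed degree, and this reduction is legitimate.

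The conceptual heart of the proof, following \cite{article27, article4}, is the observation that $Z$ and $A$ are the algebraic analogues of the linearized Weyl and Schouten tensors for the formal metric $\delta + H$ at the flat background. In particular, $Z=0$ should be read as the infinitesimal statement that $\delta + H$ is conformally flat, and combined with the trace-free condition this should produce a polynomial vector field $V$ on $\mathbb{R}^n_+$ such that
$$
H_{ij} \;=\; \partial_i V_j + \partial_j V_i - \frac{2}{n}\,(\partial_k V_k)\,\delta_{ij},
$$
i.e., $H$ lies in the image of the conformal Killing operator of $(\mathbb{R}^n, \delta)$. I would construct $V$ explicitly by integrating the algebraic Schouten tensor $A$ twice, applying a suitable polynomial gauge-fixing to guarantee that $V$ is polynomial of degree one higher than $H^{(m)}$.

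With this representation, the remaining conditions become an overdetermined, finite-dimensional linear system on the coefficients of $V$. The identity $H_{in}\equiv 0$ on $\mathbb{R}^n_+$ forces $\partial_a V_n + \partial_n V_a = 0$ and $(n-1)\partial_n V_n = \sum_b \partial_b V_b$ throughout the half-space; the umbilical hypothesis $\partial_n H_{ab} = 0$ on $\partial\mathbb{R}^n_+$ contributes second-order identities on the tangential components; and the radial boundary identity $\sum_a x_a H_{ab} = 0$ pins down the remaining tangential freedom. Exploiting homogeneity, I would then verify that the only solutions of this system are those for which $V$ already lies in the kernel of the conformal Killing operator, forcing $H^{(m)} \equiv 0$.

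\textbf{Main obstacle.} The principal difficulty is the first of these steps: the polynomial, half-space version of the implication $Z = 0 \Rightarrow H \in \operatorname{image}(\operatorname{CK})$. A standard Weyl--Schouten argument is local and allows for arbitrary smooth gauge, whereas here one needs a globally defined polynomial potential $V$ respecting the half-space structure, which requires a careful double integration of $A$ together with monitoring of constants of integration. A secondary subtlety is verifying that the radial identity $\sum_a x_a H_{ab} = 0$ and the umbilic-type condition $\partial_n H_{ab} = 0$ are both genuinely needed to eliminate the residual conformal Killing freedom; dropping either should leave nontrivial $V$ surviving in low degrees.
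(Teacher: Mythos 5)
The paper does not prove this proposition; it cites Almaraz \cite{article27} and Brendle--Chen \cite{article4}, so there is no internal proof to compare against. Your three-step outline (degree decomposition, linearized Weyl--Schouten to produce a potential $V$, then boundary constraints to kill $V$) does match the structure of the arguments in those references. However, two things deserve comment.

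First, you identify the ``main obstacle'' as the construction of the polynomial potential $V$ in the half-space setting, but that step is actually unproblematic. Since $H$ is a polynomial and $Z$ is a constant-coefficient differential operator, $Z[H]$ is a polynomial, and $Z[H]=0$ on the open half-space forces $Z[H]\equiv 0$ on all of $\mathbb{R}^n$. One then invokes the linearized Weyl--Schouten theorem on $\mathbb{R}^n$ (valid for $n\geqslant 4$, and here $n\geqslant 6$): for a trace-free symmetric $2$-tensor, $Z[H]=0$ implies $H_{ij}=\partial_i V_j+\partial_j V_i-\tfrac{2}{n}(\operatorname{div}V)\delta_{ij}$ for a vector field $V$ which may be taken to be a polynomial of one degree higher; the ``half-space structure'' and ``monitoring constants of integration'' you worry about do not actually appear, because the construction lives on all of $\mathbb{R}^n$ and the boundary conditions are imposed afterwards.

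Second, and more importantly, the genuine content of the proposition is the step you compress into the single sentence ``I would then verify that the only solutions of this system are those for which $V$ already lies in the kernel of the conformal Killing operator.'' That verification \emph{is} the proof. Once $H=L V$, one must deduce from $H_{in}\equiv 0$, $\partial_n H_{ij}|_{x_n=0}=0$, $\sum_a x_a H_{ab}|_{x_n=0}=0$, and the normalizations $H(0)=0$, $\partial_a H_{bc}(0)=0$, that $L V\equiv 0$. Concretely, $H_{nn}=0$ forces $(n-1)\partial_n V_n=\sum_a\partial_a V_a$, $H_{an}=0$ forces $\partial_n V_a=-\partial_a V_n$, the umbilicity condition then constrains the tangential Hessian of $V_n$ along $\{x_n=0\}$ to be pure trace, and the radial identity eliminates the remaining freedom; none of this is carried out, and it is precisely here that the cited references do real work (in fact their arguments run through a weighted $L^2$/integration-by-parts identity tied to the bubble $v_\varepsilon$ rather than a purely pointwise count). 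As written, your proposal is a correct high-level plan with the crucial step asserted rather than proved, so it cannot yet be accepted as a proof of the proposition.
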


\section{Test functions}\label{sec3}
In this section, we follow Brendle-Chen \cite{article4}, Chen \cite{article5}, and Almaraz \cite{article27} to construct test functions. 

For $\varepsilon>0$, let $$v_\varepsilon(x)=\varepsilon^{\frac{n-2}{2}}\left(\left(\varepsilon+x_n\right)^2+\sum_{a=1}^{n-1} x_a^2\right)^{-\frac{n-2}{2}},\quad x \in \mathbb{R}_{+}^n .$$ 
The function $v_{\varepsilon}$ is the standard bubble, and it satisfies \cite{article5}

\begin{equation}
\begin{cases}
\Delta_g v_\varepsilon=0, & \text { in } \mathbb{R}_{+}^n, \\ 
\partial_n v_\varepsilon+(n-2) v_\varepsilon^{\frac{n}{n-2}}=0, & \text { on } \partial \mathbb{R}_{+}^n, \\
4(n-1)\left(\int_{\partial \mathbb{R}_{+}^n} v_\varepsilon(x)^{\frac{2(n-1)}{n-2}} d x\right)^{\frac{1}{n-1}}=Q\left(B^n, \partial B^{n}\right) .\\
\end{cases}
\end{equation}
Moreover, $v_{\varepsilon}$ satisfies the following basic estimates \cite{article5}.
\begin{equation}\label{ve bound}
\begin{cases}
\varepsilon^{\frac{n-2}{2}}(\varepsilon+|x|)^{-n+2} \leqslant  v_\varepsilon(x) \leqslant  C(n) \varepsilon^{\frac{n-2}{2}}(\varepsilon+|x|)^{-n+2},& x \in \mathbb{R}_{+}^n,\\
\left|\partial v_\varepsilon\right|(x) \leqslant  C(n) \varepsilon^{\frac{n-2}{2}}(\varepsilon+|x|)^{-n+1},& x \in \mathbb{R}_{+}^n.\\
\end{cases}
\end{equation}

Let $\eta:\mathbb{R}\to [0,1]$ be a smooth cut-off function with $|\eta'|\leqslant C$ and
$$
\eta\big|_{[0,4/3]}\equiv 1,\qquad\eta\big|_{[5/3,\infty)}\equiv 0.
$$ 
We define $\eta_{\delta}(x)=\eta(\frac{|x|}{\delta})$ for $x\in \mathbb{R}^{n}_{+}.$

By \cite{article27, article4}, there exists a smooth vector field $V$ on $\mathbb{R}_{+}^n$, which satisfies
\begin{equation}
\begin{cases}\sum_{k=1}^n \partial_k\left[v_\varepsilon^{\frac{2 n}{n-2}}\left(\eta_\delta H_{i k}-\partial_i V_k-\partial_k V_i+\frac{2}{n} \operatorname{div} V \delta_{i k}\right)\right]=0, & x\in \mathbb{R}_{+}^n \\ \partial_n V_a = V_n =0, & x\in \partial \mathbb{R}_{+}^n,\end{cases}
\end{equation}
for $i=1, \cdots, n$ and $a=1, \cdots, n-1$.
Furthermore, $V$ satisfies the following estimate
\begin{equation}\label{V property}
    \left|\partial^\beta V(x)\right| \leqslant C(n,|\beta|) \sum_{i, k=1}^n \sum_{1 \leqslant|\alpha| \leqslant d}\left|h_{i k, \alpha}\right|(\varepsilon+|x|)^{|\alpha|+1-|\beta|}
\end{equation}
for any multi-index $\beta$.

Then we define
\begin{align}
S_{i k} & =\partial_i V_k+\partial_k V_i-\frac{2}{n} \operatorname{div} V \delta_{i k},\nonumber\\
T_{i k} & =H_{i k}-S_{i k},\nonumber\\
\psi & = \sum^n_{l=1}\partial_{l}v_{\varepsilon}V_{l} + \frac{n-2}{2n}v_{\varepsilon}\operatorname{div}V.\label{psi def}
\end{align}
This $\psi$ serves as an appropriate perturbation of $v_\varepsilon$, and will be helpful for the construction of test functions.

%It can be shown that 
%\begin{equation}
%    \begin{aligned}
%		& \Delta \psi=\sum_{i, k=1}^n\left(\frac{n-2}{4(n-1)} v_\varepsilon \partial_i \partial_k S_{i k}+\partial_k\left(\partial_i v_\varepsilon S_{i k}\right)\right), \quad x\in B_\delta(0) \cap \mathbb{R}_{+}^n \\
%		& \partial_n \psi=-\frac{1}{2(n-1)} \partial_n v_\varepsilon S_{n n}+\frac{n}{n-2} v_\varepsilon^{-1} \partial_n v_\varepsilon \psi, \quad x\in B_\delta(0) \cap \partial \mathbb{R}_{+}^n .
%  \end{aligned}
%\end{equation}

Next, we note two propositions by Almaraz \cite{article27} concerning local estimates of $v_\varepsilon$ and $\psi$.
\begin{prop}[\cite{article27}*{Proposition 3.8}]\label{ve energy1}
If $\delta_{0}$ is sufficiently small, then
$$
\begin{aligned}
	& \quad\,\int_{B_\delta \cap \mathbb{R}_{+}^n}\left(\frac{4(n-1)}{n-2}\left|\nabla\left(v_\varepsilon+\psi\right)\right|_g^2+R_g\left(v_\varepsilon+\psi\right)^2\right) d x \\
	&\leqslant  4(n-1) \int_{B_\delta \cap \partial \mathbb{R}_{+}^n} v_\varepsilon^{\frac{2}{n-2}}\left(v_\varepsilon^2+2 v_\varepsilon \psi+\frac{n}{n-2} \psi^2-\frac{n-2}{8(n-1)^2} v_\varepsilon^2S_{n n}^2\right) d \sigma \\
	 &\quad+ \int_{\partial B_\delta \cap \mathbb{R}_{+}^n} \sum_{i=1}^n\left(\frac{4(n-1)}{n-2} v_\varepsilon \partial_i v_\varepsilon+\sum_{k=1}^n\left(v_\varepsilon^2 \partial_k h_{i k}-\partial_k (v_\varepsilon^2) h_{i k}\right)\right) \frac{x_i}{|x|} d \sigma\\
	&\quad- \frac{\theta}2 \sum_{i, k=1}^n \sum_{1 \leqslant|\alpha| \leqslant d}\left|h_{i k, \alpha}\right|^2 \varepsilon^{n-2} \int_{B_\delta \cap \mathbb{R}_{+}^n}(\varepsilon+|x|)^{2|\alpha|+2-2 n} d x \\
	&\quad+ C \varepsilon^{n-2} \sum_{i, k=1}^n \sum_{1 \leqslant|\alpha| \leqslant d}\left|h_{i k, \alpha}\right| \delta^{|\alpha|+2-n}+C \varepsilon^{n-2} \delta^{2 d+4-n}
\end{aligned}
$$
for all $ 0<2\varepsilon \leqslant \delta\leqslant\delta_{0}$. Here $\theta$ is a constant depending only on $(M, g_0)$.
\end{prop}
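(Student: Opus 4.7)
The plan is to expand the bulk integral on the left in Fermi coordinates via $g=\exp(h)$, integrate by parts against the near-harmonicity of $v_\varepsilon$ and $\psi$, and then exploit the divergence equation defining $V$ as a gauge choice that converts the linear-in-$H$ contribution into a strictly nonpositive quadratic form. Concretely, I would first write
\begin{equation*}
g^{ik}=\delta^{ik}-h_{ik}+\tfrac{1}{2}(h^{2})_{ik}+O(|h|^{3}),\qquad \det g=1+O(|x|^{2d+2}),
\end{equation*}
together with the standard expansion of $R_g$ in first and second derivatives of $h$. Inside $B_\delta$, replacing $h_{ik}$ by $\eta_\delta H_{ik}$ introduces only an $O(|x|^{d+1})$ remainder, whose contribution is absorbed into the $C\varepsilon^{n-2}\delta^{2d+4-n}$ error via the pointwise bounds \eqref{ve bound}.

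The Euclidean leading piece, after integration by parts using $\Delta v_\varepsilon=0$ and the boundary identity $\partial_n v_\varepsilon=-(n-2)v_\varepsilon^{n/(n-2)}$ on $\partial\mathbb{R}_+^n$, produces both the $\partial B_\delta\cap\mathbb{R}_+^n$ boundary term on the right-hand side and the leading boundary expression $4(n-1)\int_{B_\delta\cap\partial\mathbb{R}_+^n} v_\varepsilon^{2/(n-2)}(v_\varepsilon^{2}+2v_\varepsilon\psi)\,d\sigma$. The $\tfrac{n}{n-2}\psi^{2}$ boundary term and the negative $-\tfrac{n-2}{8(n-1)^{2}}v_\varepsilon^{2}S_{nn}^{2}$ correction arise from a quadratic-in-$\psi$ expansion of $\partial_n(v_\varepsilon+\psi)\cdot(v_\varepsilon+\psi)$, combined with the explicit formula \eqref{psi def} and the boundary constraints $\partial_n V_a=V_n=0$; the $S_{nn}^{2}$ component enters because $\partial_n\psi|_{\partial\mathbb{R}_+^n}$ acquires a contribution proportional to $v_\varepsilon\,\partial_n V_n$, which the traceless projection expresses in terms of $S_{nn}$.

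The linear-in-$H$ portion of the bulk energy is organized by writing $\eta_\delta H_{ik}=T_{ik}+S_{ik}$. The defining equation for $V$ guarantees that $\sum_k \partial_k[v_\varepsilon^{2n/(n-2)}(\eta_\delta H_{ik}-S_{ik}+\tfrac{2}{n}(\operatorname{div}V)\delta_{ik})]=0$, so integration by parts against $\partial_i v_\varepsilon$ converts the $S_{ik}$ contribution into a $\partial B_\delta$ boundary remainder controlled by $C\varepsilon^{n-2}\sum|h_{ik,\alpha}|\delta^{|\alpha|+2-n}$ through \eqref{V property}; the analogous boundary portion on $\partial\mathbb{R}_+^n$ is cancelled by the $\psi$-modification, which is precisely the reason for adding $\psi$ rather than working with $v_\varepsilon$ alone. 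What survives in the interior is proportional to $-\int v_\varepsilon^{2n/(n-2)}|T|^{2}\,dx$ up to controlled errors.

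The main obstacle is establishing a uniform positive lower bound
\begin{equation*}
\int_{B_\delta\cap\mathbb{R}_+^n} v_\varepsilon^{2n/(n-2)}|T|^{2}\,dx\;\geqslant\;\theta\sum_{i,k=1}^{n}\sum_{1\leqslant|\alpha|\leqslant d}|h_{ik,\alpha}|^{2}\,\varepsilon^{n-2}\int_{B_\delta\cap\mathbb{R}_+^n}(\varepsilon+|x|)^{2|\alpha|+2-2n}\,dx.
\end{equation*}
Rescaling $x=\varepsilon y$ and grouping by homogeneity in $|\alpha|$, this reduces to positive-definiteness of an explicit quadratic form on the finite-dimensional space of admissible polynomial tensors obeying $H_{in}=0$ and $\sum_{a}x_a H_{ab}=0$ on $\partial\mathbb{R}_+^n$. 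Positivity fails only when $T\equiv 0$, i.e.\ $H$ is pointwise the symmetric traceless gradient of some $V$; a direct computation then gives $Z_{ijkl}\equiv 0$ and $\partial_n H_{ij}|_{\partial\mathbb{R}_+^n}=0$, and Proposition \ref{vanishing  H} forces $H\equiv 0$. Continuity on the polynomial space upgrades this to the uniform constant $\theta=\theta(M,g_0)>0$, and a final accounting of remainder terms via \eqref{ve bound} and \eqref{V property} yields the stated error bounds.
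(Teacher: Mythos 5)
This proposition is cited directly from Almaraz \cite{article27}*{Proposition 3.8}; the present paper gives no proof of it, so your sketch must be measured against the argument in that reference (which in turn adapts Brendle \cite{article3} and Brendle--Chen \cite{article4}). Your outline captures the correct overall architecture of that proof: expand the energy in Fermi coordinates with $g=\exp(h)$ and $\det g = 1+O(|x|^{2d+2})$, integrate by parts against $\Delta v_\varepsilon=0$ and $\partial_n v_\varepsilon+(n-2)v_\varepsilon^{n/(n-2)}=0$, use the gauge vector field $V$ and the divergence identity to repackage the linear-in-$H$ contribution, and extract coercivity via a compactness argument on the space of admissible polynomial tensors combined with Proposition \ref{vanishing  H}.

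Where the sketch is too quick is the coercivity step, which is the technical heart of the proposition. It is not the case that the surviving interior contribution is simply $-\int v_\varepsilon^{2n/(n-2)}|T|^2\,dx$; the quadratic-in-$H$ remainder collects several further pieces coming from the second-order expansion of $g^{ik}$, the $\psi$-terms themselves, and the boundary interactions that produce the $S_{nn}^2$ correction, and it is the total assembled quadratic form that must be shown negative with a uniform constant. Moreover, the reduction ``rescale by $x=\varepsilon y$ and group by homogeneity'' does not decouple cleanly: different homogeneity degrees of $H$ scale by different powers of $\varepsilon$, so the rescaled form contains cross terms between degrees that must be absorbed before the compactness argument can be applied degree-by-degree, and for the critical degree $|\alpha|=d$ (when $n$ is even) the model integral is only marginally convergent. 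These points are handled in \cite{article3, article4, article27} with weighted inequalities and the algebraic structure of the linearized tensor $Z_{ijkl}$; your sketch asserts the conclusion of that analysis rather than carrying it out, so while the approach is faithful to the cited proof, it is not a proof on its own.
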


\begin{prop}[\cite{article27}*{(3.23)}]\label{ve energy2}
If $\delta_{0}$ is sufficiently small, then
$$
\begin{aligned}
		&\quad\, 4(n-1) \int_{B_\delta \cap \partial \mathbb{R}_{+}^n} v_\varepsilon^{\frac{2}{n-2}}\left(v_\varepsilon^2+2 v_\varepsilon \psi+\frac{n}{n-2} \psi^2-\frac{n-2}{8(n-1)^2} v_\varepsilon^2 S_{n n}^2\right) d \sigma \\
		& \leqslant \mathcal{Q}(B^{n}, \partial B^n)\left(\int_{B_\delta \cap \partial \mathbb{R}_{+}^n}\left(v_\varepsilon+\psi\right)^{\frac{2(n-1)}{n-2}} d \sigma\right)^{\frac{n-2}{n-1}}\\
  &\quad+C \sum_{i, k=1}^n \sum_{1 \leqslant|\alpha| \leqslant d}\left|h_{i k, \alpha}\right| \delta^{|\alpha|-n+1} \varepsilon^{n-1} \\
		& \quad+C \sum_{i, k=1}^n \sum_{1 \leqslant|\alpha| \leqslant d}\left|h_{i k, \alpha}\right|^2 \varepsilon^{n-1} \delta \int_{B_\delta \cap \partial \mathbb{R}_{+}^n}(\varepsilon+|x|)^{2|\alpha|-2 n+2} d \sigma
\end{aligned}
$$
for all $ 0<2\varepsilon \leqslant \delta\leqslant\delta_{0}$.
\end{prop}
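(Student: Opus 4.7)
The plan is to derive this refined boundary inequality by applying the sharp Sobolev trace inequality on $\mathbb{R}_+^n$ to a cut-off version of the perturbed bubble $v_\varepsilon + \psi$, and then carrying out an integration-by-parts expansion to match the left-hand side term by term. First, I would multiply $v_\varepsilon + \psi$ by the cut-off $\eta_\delta$ to produce a test function compactly supported in $B_{5\delta/3}\cap\mathbb{R}_+^n$ and apply the sharp trace inequality
$$\mathcal{Q}(B^n,\partial B^n)\left(\int_{\partial\mathbb{R}_+^n}\phi^{\frac{2(n-1)}{n-2}}\,d\sigma\right)^{\frac{n-2}{n-1}}\leqslant \frac{4(n-1)}{n-2}\int_{\mathbb{R}_+^n}|\nabla \phi|^2\,dx$$
to this $\phi$. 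The contribution of the transition region $\{\delta < |x| < 5\delta/3\}$, where $\eta_\delta'\ne 0$, is controlled by the decay estimates (\ref{ve bound}) and (\ref{V property}), producing error terms of the form $C\sum|h_{ik,\alpha}|\delta^{|\alpha|-n+1}\varepsilon^{n-1}$ that match the first error term in the claimed inequality.

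Next, I would expand the Dirichlet energy as $|\nabla v_\varepsilon|^2 + 2\nabla v_\varepsilon \cdot \nabla \psi + |\nabla \psi|^2$ and integrate by parts. The Euclidean harmonicity $\Delta v_\varepsilon = 0$ together with the Neumann condition $\partial_n v_\varepsilon = -(n-2)v_\varepsilon^{n/(n-2)}$ converts the first two pieces, after multiplication by $\tfrac{4(n-1)}{n-2}$, into the boundary integral $4(n-1)\int v_\varepsilon^{2/(n-2)}(v_\varepsilon^2 + 2 v_\varepsilon\psi)\,d\sigma$. For the $\int|\nabla \psi|^2$ piece I would use the explicit form (\ref{psi def}), $\psi = \partial_l v_\varepsilon\,V_l + \tfrac{n-2}{2n}v_\varepsilon\operatorname{div} V$, and integrate by parts twice, invoking the divergence-form equation satisfied by $V$ together with the boundary conditions $V_n = \partial_n V_a = 0$. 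The interior contributions should reorganize into the boundary integral $\tfrac{4(n-1)n}{(n-2)^2}\int v_\varepsilon^{2/(n-2)}\psi^2\,d\sigma$ plus a negative boundary term proportional to $\int v_\varepsilon^{2(n-1)/(n-2)} S_{nn}^2\,d\sigma$ with precisely the coefficient $-\tfrac{n-2}{8(n-1)^2}\cdot 4(n-1)$. All residual interior pieces are controlled pointwise via (\ref{V property}), producing the second error term $C\sum|h_{ik,\alpha}|^2\varepsilon^{n-1}\delta^2\int(\varepsilon+|x|)^{2|\alpha|-2n+2}\,d\sigma$.

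The main obstacle is matching the exact coefficient $-\tfrac{n-2}{8(n-1)^2}$ in front of $v_\varepsilon^2 S_{nn}^2$: this requires a delicate cancellation between the $H_{ik}$ terms entering through the equation for $V$ and the symmetric tensor $S_{ik} = \partial_i V_k + \partial_k V_i - \tfrac{2}{n}\operatorname{div} V\,\delta_{ik}$ appearing inside $\nabla \psi$. The tangential ($a,b = 1,\dots,n-1$) components of $S$ should cancel against the corresponding $H_{ab}$ contributions via the defining equation of $V$ and the identity $\sum_a x_a H_{ab}=0$ on $\partial\mathbb{R}_+^n$, while the normal component $S_{nn}$ survives because the boundary condition only constrains $\partial_n V_a$ and $V_n$, leaving $\partial_n V_n$ free. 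Keeping track of these $n$-normal contributions is the delicate step that pins down the sharp coefficient.
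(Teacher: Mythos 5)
This proposition is a verbatim citation of Almaraz \cite{article27}*{(3.23)}; the present paper gives no proof, so the comparison is with Almaraz's argument. That argument is a \emph{boundary-only} expansion: one Taylor-expands $\int_{B_\delta\cap\partial\mathbb{R}_+^n}(v_\varepsilon+\psi)^{\frac{2(n-1)}{n-2}}\,d\sigma$ to second order in $\psi$, raises the result to the power $\frac{n-2}{n-1}$, invokes $\mathcal{Q}(B^n,\partial B^n)=4(n-1)\bigl(\int_{\partial\mathbb{R}_+^n}v_\varepsilon^{\frac{2(n-1)}{n-2}}\,d\sigma\bigr)^{\frac{1}{n-1}}$, and uses the concavity of $t\mapsto t^{\frac{n-2}{n-1}}$ together with a divergence identity \emph{on} $\partial\mathbb{R}_+^n$ coming from the boundary data of $V$ and the decomposition $T=H-S$. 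This is what generates the favourable term $-\frac{n-2}{8(n-1)^2}v_\varepsilon^2S_{nn}^2$ and the two error sums; no interior integral appears anywhere.

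Your proposal has a genuine directional gap that cannot be repaired as written. The sharp trace inequality reads $\mathcal{Q}(B^n,\partial B^n)\bigl(\int_{\partial\mathbb{R}_+^n}\phi^{\frac{2(n-1)}{n-2}}\,d\sigma\bigr)^{\frac{n-2}{n-1}}\leqslant\frac{4(n-1)}{n-2}\int_{\mathbb{R}_+^n}|\nabla\phi|^2\,dx$. Chaining this with your proposed Dirichlet-energy identity (``Dirichlet energy of $\eta_\delta(v_\varepsilon+\psi)$ equals the left-hand side of the proposition up to errors'') produces $\mathcal{Q}(B^n,\partial B^n)\bigl(\int(v_\varepsilon+\psi)^{\frac{2(n-1)}{n-2}}\bigr)^{\frac{n-2}{n-1}}\leqslant\text{LHS}+\text{errors}$, which is the \emph{reverse} of the claimed inequality. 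Turning the trace inequality around would require a quantitative near-extremality (stability) argument that you do not supply and that Almaraz does not use. Moreover, the key intermediate step --- converting $\int_{\mathbb{R}_+^n}|\nabla\psi|^2\,dx$ into $n\int_{\partial\mathbb{R}_+^n}v_\varepsilon^{2/(n-2)}\psi^2\,d\sigma$ minus a multiple of $\int_{\partial\mathbb{R}_+^n}v_\varepsilon^{\frac{2(n-1)}{n-2}}S_{nn}^2\,d\sigma$ --- is not a valid integration-by-parts identity: $\psi$ is neither harmonic nor does it satisfy the Robin condition satisfied by $v_\varepsilon$, and the divergence-form equation for $V$ produces terms in $H_{ik}$ (these feed into Proposition~\ref{ve energy1}, where the $g$-metric terms provide the matching cancellations), not a boundary integral of $\psi^2$. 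In short, you have imported the mechanism of the companion estimate Proposition~\ref{ve energy1} (which relates the curved Dirichlet energy to the Euclidean boundary integral) into the present statement, whose content is instead a purely boundary comparison between that Euclidean boundary integral and the trace quotient of $v_\varepsilon+\psi$.
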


In addition, we also have the following local estimate of $v_\varepsilon$ and $\psi$.
\begin{lem}\label{ve est}
If $\delta_{0}$ is sufficiently small, then
    $$\left|(v_{\varepsilon}+\psi)^{\frac{2(n-1)}{n-2}}-v_{\varepsilon}^{\frac{2(n-1)}{n-2}}\right|\leqslant C(n)\varepsilon^{n-1}(\varepsilon+|x|)^{3-2n}$$
for all $ 0<2\varepsilon \leqslant \delta\leqslant\delta_{0}$ and $|x|<\delta$.
\end{lem}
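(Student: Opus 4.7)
The plan is to compare $(v_\varepsilon+\psi)^{2(n-1)/(n-2)}$ with $v_\varepsilon^{2(n-1)/(n-2)}$ via a mean value argument, which requires two ingredients: a pointwise upper bound on $|\psi|$ that is pointwise small relative to $v_\varepsilon$, and the elementary inequality $|(a+b)^p-a^p|\leqslant C\,(a+|b|)^{p-1}|b|$ for $p=2(n-1)/(n-2)>1$.

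First I would establish the pointwise bound
\[
|\psi(x)|\leqslant C\,\varepsilon^{\frac{n-2}{2}}(\varepsilon+|x|)^{3-n},\qquad x\in B_\delta\cap\mathbb{R}^n_+.
\]
From the definition (\ref{psi def}), $\psi = \partial_l v_\varepsilon\, V_l + \tfrac{n-2}{2n}v_\varepsilon\operatorname{div} V$. Using (\ref{V property}) with $|\beta|=0$ and $|\beta|=1$, and noting $(\varepsilon+|x|)^{|\alpha|+1}\leqslant C(\varepsilon+|x|)^{2}$ and $(\varepsilon+|x|)^{|\alpha|}\leqslant C(\varepsilon+|x|)$ for $1\leqslant|\alpha|\leqslant d$ on $B_{\delta_0}$, one gets $|V(x)|\leqslant C(\varepsilon+|x|)^2$ and $|\partial V(x)|\leqslant C(\varepsilon+|x|)$. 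Combining these with the basic estimates (\ref{ve bound}) yields the claimed bound on $|\psi|$.

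Next I would use the lower bound $v_\varepsilon(x)\geqslant\varepsilon^{(n-2)/2}(\varepsilon+|x|)^{2-n}$ from (\ref{ve bound}) to deduce
\[
\frac{|\psi(x)|}{v_\varepsilon(x)}\leqslant C(\varepsilon+|x|),
\]
so that, shrinking $\delta_0$ if necessary, $|\psi|\leqslant \tfrac{1}{2}v_\varepsilon$ on $B_\delta\cap\mathbb{R}^n_+$. In particular $v_\varepsilon+\psi$ is positive and comparable to $v_\varepsilon$. With $p=\tfrac{2(n-1)}{n-2}$, the mean value theorem then gives
\[
\bigl|(v_\varepsilon+\psi)^{p}-v_\varepsilon^{p}\bigr|\leqslant C(n)\,v_\varepsilon^{p-1}\,|\psi|.
\]
Since $p-1=\tfrac{n}{n-2}$, the upper bound in (\ref{ve bound}) yields $v_\varepsilon^{p-1}\leqslant C\varepsilon^{n/2}(\varepsilon+|x|)^{-n}$, and multiplying by the estimate for $|\psi|$ produces $C\varepsilon^{n-1}(\varepsilon+|x|)^{3-2n}$, which is the desired bound.

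There is no real obstacle here; the only delicate point is bookkeeping the multi-index sum in (\ref{V property}) to ensure the $|\alpha|=1$ terms genuinely dominate on $B_{\delta_0}$, and checking that the smallness of $\delta_0$ used to absorb the $|h_{ik,\alpha}|$ factors is independent of $\varepsilon$. Both follow from the fact that $h_{ik,\alpha}$ depends only on $(M,g_0)$ and that $\delta_0$ is chosen a priori.
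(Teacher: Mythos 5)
Your proof is correct and follows essentially the same route as the paper: both establish $|\psi|\leqslant C\,v_\varepsilon(\varepsilon+|x|)\leqslant\tfrac12 v_\varepsilon$ from the definition of $\psi$ together with (\ref{V property}) and (\ref{ve bound}), and both then apply a mean-value/Taylor estimate to the power $p=\tfrac{2(n-1)}{n-2}$ to conclude. The only cosmetic difference is that the paper factors out $v_\varepsilon^p$ and applies Taylor's theorem to $(1+y)^p$ with $y=\psi/v_\varepsilon$, whereas you apply the mean value theorem directly to $(v_\varepsilon+\psi)^p-v_\varepsilon^p$; the computations are otherwise identical.
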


\begin{proof}
By Taylor's Theorem, there exists a constant $C(n)$ such that
    $$\left|(1+y)^{\frac{2(n-1)}{n-2}}-1\right|\leqslant C(n)|y|$$
for $|y|\leqslant \frac12$ sufficiently small. On the other hand, from the definition (\ref{psi def}) of $\psi$, the properties (\ref{V property}) of $V$ and (\ref{ve bound}), we have
\begin{align}\label{phi<ve}
		|\psi|& \leqslant  C\varepsilon^{\frac{n-2}{2}}(\varepsilon+|x|)^{1-n}\sum_{i, k=1}^{n}\sum_{|\alpha|=1}^{d}|h_{ik,\alpha}|(\varepsilon+|x|)^{|\alpha|+1}\nonumber\\
		&\quad+C\varepsilon^{\frac{n-2}{2}}(\varepsilon+|x|)^{2-n}\sum_{i, k=1}^{n}\sum_{|\alpha|=1}^{d}|h_{ik,\alpha}|(\varepsilon+|x|)^{|\alpha|}\nonumber\\
		&\leqslant  Cv_{\varepsilon}\sum_{|\alpha|=1}^{d}(\varepsilon+|x|)^{|\alpha|}\nonumber\\
  & \leqslant  Cv_{\varepsilon}(\varepsilon+|x|)\leqslant \frac{1}{2}v_{\varepsilon}
\end{align}
 for $\delta_0$ sufficiently small. Thus, we can take $y=\psi/v_\varepsilon$ and get
$$
\begin{aligned}
    \left|(v_{\varepsilon}+\psi)^{\frac{2(n-1)}{n-2}}-v_{\varepsilon}^{\frac{2(n-1)}{n-2}}\right|& \leqslant C(n)v_{\varepsilon}^{\frac{2(n-1)}{n-2}-1}|\psi|\leqslant C(n)v_{\varepsilon}^{\frac{2(n-1)}{n-2}}(\varepsilon+|x|)\\
    &\leqslant C(n)\varepsilon^{n-1}(\varepsilon+|x|)^{3-2n}.
\end{aligned}$$
\end{proof}

\subsection*{The test function for non-umbilic boundary}
Let us define 
\begin{equation}\label{phi1}
    \phi_{1} = \eta_{\delta}\cdot(v_{\varepsilon}+\psi)
\end{equation} 
as the local test function. By Proposition \ref{ve energy1} and Proposition \ref{ve energy2}, we have the following energy estimate for $\phi_1$. A similar estimate can also be obtained by a slight modification of the proof of \cite{article27}*{Corollary 3.10}.
\begin{prop}\label{phi1 int}
If $\delta_{0}$ is sufficiently small, then
$$
\begin{aligned}
	&\quad\, \int_{M}\left(\frac{4(n-1)}{n-2}\left|\nabla\phi_{1}\right|_g^2+R_g\phi_{1}^2\right) dV_{g} \\
	& \leqslant \mathcal{Q}(B^{n}, \partial B^n)\left(\int_{\partial M}\phi_{1}^{\frac{2(n-1)}{n-2}} d \sigma_{g} \right)^{\frac{n-2}{n-1}}\\
 &\quad -\theta\|\pi_{g}(p)\|^{2} \varepsilon^{n-2} \int_{B_\delta \cap \mathbb{R}_{+}^n}(\varepsilon+|x|)^{4-2 n} d x + C(\delta)\varepsilon^{n-2}
\end{aligned}
$$
for all $ 0<2\varepsilon \leqslant \delta\leqslant\delta_{0}$. Here $\theta$ is a constant depending only on $(M, g_0)$.
\end{prop}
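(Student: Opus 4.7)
The plan is to combine Propositions \ref{ve energy1} and \ref{ve energy2} directly and sweep all cutoff-related and domain-conversion errors into the $C(\delta)\varepsilon^{n-2}$ remainder. Working in Fermi coordinates around $p$, I split the energy of $\phi_1$ on $M$ as the sum of its integrals over $B_\delta\cap\mathbb{R}_+^n$ (where $\eta_\delta\equiv 1$, so $\phi_1=v_\varepsilon+\psi$) and over the cutoff annulus $(B_{5\delta/3}\setminus B_\delta)\cap\mathbb{R}_+^n$. On the annulus, the pointwise bounds (\ref{ve bound}) on $v_\varepsilon$, the estimate (\ref{phi<ve}) on $\psi$, and $|\eta_\delta'|\leqslant C\delta^{-1}$ yield $|\nabla\phi_1|_g^2+\phi_1^2\leqslant C\varepsilon^{n-2}\delta^{-2n+2}$ pointwise, so this contribution is of order $C(\delta)\varepsilon^{n-2}$ after integration. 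The inner integral is then bounded by Proposition \ref{ve energy1}, and into its principal boundary term I substitute the estimate from Proposition \ref{ve energy2}.

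All remaining pieces---the $\partial B_\delta$-flux and the last two error lines of Proposition \ref{ve energy1}, together with the error terms of Proposition \ref{ve energy2}---are controlled using (\ref{ve bound}) and the fact that $|h_{ik}(x)|=O(|x|)$, and each is bounded by $C(\delta)\varepsilon^{n-2}$ (the $\varepsilon^{n-1}$ prefactor in Proposition \ref{ve energy2} being even better than needed). To isolate the stated negative term I look at the quadratic
$-\frac{\theta}{2}\sum_{i,k}\sum_{1\leqslant|\alpha|\leqslant d}|h_{ik,\alpha}|^2\varepsilon^{n-2}\int_{B_\delta\cap\mathbb{R}_+^n}(\varepsilon+|x|)^{2|\alpha|+2-2n}\,dx$
appearing in Proposition \ref{ve energy1}. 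Every summand is non-positive, so I discard those with $|\alpha|\geqslant 2$ and invoke (\ref{h=pi}) on the $|\alpha|=1$ slice, obtaining exactly $-2\theta|\pi_g(0)|^2\varepsilon^{n-2}\int_{B_\delta\cap\mathbb{R}_+^n}(\varepsilon+|x|)^{4-2n}\,dx$, which matches the statement after a harmless redefinition of $\theta$.

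Finally, I need to replace $\int_{B_\delta\cap\partial\mathbb{R}_+^n}(v_\varepsilon+\psi)^{\frac{2(n-1)}{n-2}}\,d\sigma$ appearing inside the Yamabe bound of Proposition \ref{ve energy2} with $\int_{\partial M}\phi_1^{\frac{2(n-1)}{n-2}}\,d\sigma_g$. Since $\phi_1\geqslant 0$, enlarging the domain of integration to the whole $\partial M$ only increases the integral; the change of measure uses $d\sigma_g=(1+O(|x|^{2d+2}))d\sigma$ from the Fermi normalization $\det g=1+O(|x|^{2d+2})$, and combined with Lemma \ref{ve est} the induced correction is $O(\varepsilon^{n-1})$, which after raising to the $\frac{n-2}{n-1}$ power contributes only an $O(C(\delta)\varepsilon^{n-2})$ error. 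The main obstacle is the clean bookkeeping of these many lower-order contributions to confirm that every term beyond the $|\pi_g(0)|^2$-piece is indeed $O(C(\delta)\varepsilon^{n-2})$; this is routine given the sharp concentration of $v_\varepsilon$ and $\psi$ near the origin.
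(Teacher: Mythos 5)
There is a genuine gap in how you treat the quadratic error term coming from Proposition~\ref{ve energy2}, namely
\[
J_2 \;:=\; C\sum_{i,k=1}^n\sum_{1\leqslant|\alpha|\leqslant d}|h_{ik,\alpha}|^2\,\varepsilon^{n-1}\,\delta^2\int_{B_\delta\cap\partial\mathbb{R}_+^n}(\varepsilon+|x|)^{2|\alpha|-2n+2}\,d\sigma .
\]
You assert that, together with the other errors, this is $O\!\left(C(\delta)\varepsilon^{n-2}\right)$, noting the $\varepsilon^{n-1}$ prefactor. But the boundary integral itself blows up as $\varepsilon\to 0$: for $|\alpha|=1$ one has $\int_{B_\delta\cap\partial\mathbb{R}_+^n}(\varepsilon+|x|)^{4-2n}\,d\sigma = \omega_{n-2}\varepsilon^{3-n}\int_0^{\delta/\varepsilon}(1+s)^{4-2n}s^{n-2}\,ds \sim C\varepsilon^{3-n}$, so $J_2\sim C\delta^2\varepsilon^{2}$. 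When $n\geqslant 5$ we have $\varepsilon^{2}\gg\varepsilon^{n-2}$, so $J_2$ is \emph{not} bounded by $C(\delta)\varepsilon^{n-2}$; it is of exactly the same order in $\varepsilon$ as the main negative term $J_4 = -\tfrac{\theta}{2}\sum\sum|h_{ik,\alpha}|^2\varepsilon^{n-2}\int_{B_\delta\cap\mathbb{R}_+^n}(\varepsilon+|x|)^{2|\alpha|+2-2n}\,dx$. (It happens that for $n=4$ one has $\varepsilon^{n-2}=\varepsilon^2$ and the absorption is trivial, but the proposition is stated and used for all $n\geqslant 4$.)

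The paper handles this by comparing $J_2$ against $J_4$ rather than against the error: the explicit $\delta^2$ factor in $J_2$ lets one show $J_2 + J_4 < \tfrac12 J_4$ once $\delta$ is small, and then only \emph{half} of $J_4$ is used (after dropping $|\alpha|\geqslant 2$ and applying~(\ref{h=pi})) to produce the stated $-\theta|\pi_g(0)|^2$ term. Your write-up keeps all of $J_4$, giving $-2\theta$, but this is not a harmless redefinition because you have no valid bound on $J_2$ to begin with; the inequality simply does not follow from your estimates for $n\geqslant 5$. The rest of your argument---the annulus contribution, the domain enlargement via positivity of $\phi_1$, the $\det g$ correction, and the handling of $J_1,J_3,J_5$---is consistent with the paper's proof; the gap is specifically in the absorption of $J_2$.
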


\begin{proof}
Since $\operatorname{det}g=1+O(|x|^{2d+2}),$ we can choose small $\delta $ to make $dV_{g}$ close to $dx$ and $d\sigma_{g}$ close to $d\sigma.$ By Proposition \ref{ve energy1} and Proposition \ref{ve energy2},
$$
\begin{aligned}
       	&\quad\, \int_{\Omega_{\delta}}\left(\frac{4(n-1)}{n-2}\left|\nabla\phi_{1}\right|_g^2+R_g\phi_{1}^2\right) dV_{g} \\
	& \leqslant \mathcal{Q}(B^{n}, \partial B^n)\left(\int_{\partial M}\phi_{1}^{\frac{2(n-1)}{n-2}} d \sigma_{g} \right)^{\frac{n-2}{n-1}}+C(\delta)\varepsilon^{n-2} + J_1 +J_2+J_3+J_4+J_5,
 \end{aligned}
$$
where
$$\begin{aligned}
&J_1=C \sum_{i, k=1}^n \sum_{1 \leqslant|\alpha| \leqslant d}\left|h_{i k, \alpha}\right| \delta^{|\alpha|-n+1} \varepsilon^{n-1},\\
&J_2=C \sum_{i, k=1}^n \sum_{1 \leqslant|\alpha| \leqslant d}\left|h_{i k, \alpha}\right|^2 \varepsilon^{n-1} \delta \int_{B_\delta \cap \partial \mathbb{R}_{+}^n}(\varepsilon+|x|)^{2|\alpha|-2 n+2} d \sigma,\\
&J_3=\int_{\partial B_\delta \cap \mathbb{R}_{+}^n} \sum_{i=1}^n\left(\frac{4(n-1)}{n-2} v_\varepsilon \partial_i v_\varepsilon+\sum_{k=1}^n\left(v_\varepsilon^2 \partial_k h_{i k}-\partial_k (v_\varepsilon^2) h_{i k}\right)\right) \frac{x_i}{|x|} d \sigma,\\
&J_4=- \frac{\theta}2 \sum_{i, k=1}^n \sum_{1 \leqslant|\alpha| \leqslant d}\left|h_{i k, \alpha}\right|^2 \varepsilon^{n-2} \int_{B_\delta \cap \mathbb{R}_{+}^n}(\varepsilon+|x|)^{2|\alpha|+2-2 n} d x,\\
&J_5=C \varepsilon^{n-2} \sum_{i, k=1}^n \sum_{1 \leqslant|\alpha| \leqslant d}\left|h_{i k, \alpha}\right| \delta^{|\alpha|+2-n}.
 \end{aligned}
    $$

Since $\varepsilon<\delta$ we have
$$J_1\leqslant C\varepsilon^{n-1}\delta^{|\alpha|-n+1}<C\varepsilon^{n-2}\delta^{|\alpha|-n+2}\leqslant C(\delta)\varepsilon^{n-2}.
$$

Notice that
    $$
\begin{aligned}
    \varepsilon^{n-1} \delta \int_{B_\delta \cap \partial \mathbb{R}_{+}^n}(\varepsilon+|x|)^{2|\alpha|-2 n+2} d \sigma&=\omega_{n-2}\varepsilon^{2|\alpha|}\delta\int_{0}^{\delta/\varepsilon}(1+r)^{2|\alpha|-2n+2}r^{n-2}dr,\\
   \text{and } \quad\varepsilon^{n-2} \int_{B_\delta \cap \mathbb{R}_{+}^n}(\varepsilon+|x|)^{2|\alpha|+2-2 n} d x&=\omega_{n-1}\varepsilon^{2|\alpha|}\int_{0}^{\delta/\varepsilon}(1+r)^{2|\alpha|-2n+2}r^{n-1}dr.
\end{aligned}
    $$
Take $\delta$ small and we have
$$\begin{aligned}
&\quad\,    C\varepsilon^{n-1} \delta \int_{B_\delta \cap \partial \mathbb{R}_{+}^n}(\varepsilon+|x|)^{2|\alpha|-2 n+2} d \sigma\\
&=C\omega_{n-2}\varepsilon^{2|\alpha|}\delta\left(\int_{0}^{1}(1+r)^{2|\alpha|-2n+2}r^{n-2}dr
+\int_{1}^{\delta/\varepsilon}(1+r)^{2|\alpha|-2n+2}r^{n-2}dr\right)\\
&< \frac{\theta}{4}\omega_{n-1}\varepsilon^{2|\alpha|}\left(\int_{0}^{1}(1+r)^{2|\alpha|-2n+2}r^{n-1}dr
+\int_{1}^{\delta/\varepsilon}(1+r)^{2|\alpha|-2n+2}r^{n-1}dr\right)\\
&=\frac{\theta}{4}\varepsilon^{n-2} \int_{B_\delta \cap \mathbb{R}_{+}^n}(\varepsilon+|x|)^{2|\alpha|+2-2 n} d x.
\end{aligned}$$
This means, 
$$
J_2+J_4<\frac12 J_4.
$$
Furthermore, $\sum_{|\alpha|=1}\sum_{i,k=1}^{n}|h_{ik,\alpha}|^{2}=4\|\pi_{g}(p)\|^{2} $ by (\ref{h=pi}). So

$$
\begin{aligned}
 &\quad\,\sum_{1 \leqslant|\alpha| \leqslant d}\left(\sum_{i,k=1}^{n}\left|h_{i k, \alpha}\right|^2 \varepsilon^{n-2} \int_{B_\delta \cap \mathbb{R}_{+}^n}(\varepsilon+|x|)^{2|\alpha|+2-2 n} d x\right)\\
 &>\sum_{|\alpha|=1}\left(\sum_{i,k=1}^{n}\left|h_{i k, \alpha}\right|^2 \varepsilon^{n-2} \int_{B_\delta \cap \mathbb{R}_{+}^n}(\varepsilon+|x|)^{2|\alpha|+2-2 n} d x\right)\\
 & =4 \|\pi_{g}(p)\|^{2}\varepsilon^{n-2} \int_{B_\delta \cap \mathbb{R}_{+}^n}(\varepsilon+|x|)^{4-2 n} d x.   
\end{aligned}
$$
Thus,
$$
J_2+J_4<\frac12 J_4< -\theta\|\pi_{g}(p)\|^{2} \varepsilon^{n-2} \int_{B_\delta \cap \mathbb{R}_{+}^n}(\varepsilon+|x|)^{4-2 n} d x.
$$

On the other hand, by (\ref{ve bound}) and the Taylor expansion of $h_{ik}$, we have
$$\begin{aligned}
&J_3=\int_{\partial B_\delta \cap \mathbb{R}_{+}^n} \sum_{i=1}^n\left(\frac{4(n-1)}{n-2} v_\varepsilon \partial_i v_\varepsilon+\sum_{k=1}^n\left(v_\varepsilon^2 \partial_k h_{i k}-\partial_k (v_\varepsilon^2) h_{i k}\right)\right) \frac{x_i}{|x|} d \sigma\\
&\leqslant C\varepsilon^{n-2}\int_{\partial B_\delta \cap \mathbb{R}_{+}^n}\sum_{i,k=1}^n\sum_{1\leqslant|\alpha|\leqslant d} (\varepsilon+|x|)^{3-2 n}\left(1+(\varepsilon+|x|)|h_{ik,\alpha}|\delta^{|\alpha|-1} +|h_{ik,\alpha}|\delta^{|\alpha|}\right)  d \sigma\\
&\leqslant C\varepsilon^{n-2}\left(\delta^{3-2n}+\sum_{i,k=1}^n\sum_{1\leqslant|\alpha|\leqslant d}|h_{ik,\alpha}|\delta^{|\alpha|+3-2n}\right)\delta^{n-1}\\
&\leqslant C(\delta)\varepsilon^{n-2}.
\end{aligned}$$

In addition,
$$J_5\leqslant C\varepsilon^{n-2}\delta^{|\alpha|-n+2}\leqslant C(\delta)\varepsilon^{n-2}.
$$

Finally, we estimate the energy of $\phi_1$ outside of $\Omega_\delta$. Note $|\psi|\leqslant \frac{1}{2}v_{\varepsilon}$ as in (\ref{phi<ve}), so
$$
\left|\int_{ M\setminus\Omega_\delta}R_{g}\phi_{1}^{2}dV_g\right|=\left|\int_{\Omega_{2\delta}\setminus \Omega_{\delta}}R_{g}\phi_{1}^{2}dV_g\right|\leqslant C\int_{\Omega_{2\delta}\setminus \Omega_{\delta}} v_{\varepsilon}^{2} dV_{g}\leqslant C(\delta)\varepsilon^{n-2}.
$$
To estimate the gradient term, we differentiate (\ref{psi def}). Using the standard estimate $|\nabla^2 v_\varepsilon| \leqslant C \varepsilon^{\frac{n-2}{2}}(\varepsilon+|x|)^{-n}$ alongside the bounds for $V$ from (\ref{V property}), we obtain
$$
|\nabla \psi| \leqslant C \varepsilon^{\frac{n-2}{2}} \left( (\varepsilon+|x|)^{2-n} + (\varepsilon+|x|)^{1-n+1}+ (\varepsilon+|x|)^{2-n} \right) \leqslant C \varepsilon^{\frac{n-2}{2}} (\varepsilon+|x|)^{2-n}.
$$
Since $|x| < 2\delta \ll 1$, this implies $|\nabla \psi| \leqslant C|\nabla v_{\varepsilon}| \leqslant C \varepsilon^{\frac{n-2}{2}} (\varepsilon+|x|)^{1-n}$. Thus,
$$
\begin{aligned}
    &\quad\,\int_{M\setminus\Omega_\delta}|\nabla \phi_{1}|_{g}^{2}dV_g\\
    &\leqslant C\int_{\Omega_{2\delta}\setminus\Omega_{\delta}}\left(|\nabla \eta_{\delta}|_{g}^{2}(v_{\varepsilon}+\psi)^{2}+|\nabla(v_{\varepsilon}+\psi)|_{g}^{2}\eta_{\delta}^{2}\right)dV_{g}\\
    &\leqslant C\varepsilon^{n-2}\int_{B_{2\delta}\setminus B_{\delta}}\left(\delta^{-2}(\varepsilon+|x|)^{4-2n}+(\varepsilon+|x|)^{2-2n}\right)dx\\
    &\leqslant C(\delta)\varepsilon^{n-2}.
\end{aligned}
$$
This means,
$$
\int_{M\setminus\Omega_{\delta}}\left(\frac{4(n-1)}{n-2}\left|\nabla\phi_{1}\right|_g^2+R_g\phi_{1}^2\right) dV_{g}\leqslant C(\delta)\varepsilon^{n-2}.
$$

Putting them together gives the proposition.
\end{proof}

\subsection*{The test function for umbilic boundary}
For manifolds with umbilic boundary, since $h_{g}=0$, we have $\pi_{g}=0$. So by (\ref{h=pi}),
$$\sum_{|\alpha|=1}\sum_{i,k=1}^{n}|h_{ik,\alpha}|^{2}={4}\|\pi_{g}(0)\|^{2}=0.$$ 
Thus in the Taylor expansion of $h$, the subscript starts from $|\alpha|=2$. 

We make use of $G: M\setminus\{p\}\rightarrow\mathbb{R}$ the Green's
function for the conformal Laplacian with Neumann boundary condition with pole at $p$. More precisely, $G$ satisfies
\begin{equation*}
	\begin{cases}\frac{4(n-1)}{n-2} \Delta_{g} G-R_{g} G=0\qquad& \text{ in } M \setminus\left\{p\right\}, \\  G_\nu=0& \text{ on } \partial M \setminus\left\{p\right\}.\end{cases}
\end{equation*}
We also assume $G$ is normalized such that $\lim_{|x|\to 0}|x|^{n-2}G(x)=1$. Then $G$ satisfies
\begin{equation}\label{G}
    \left|G(x)-| x|^{2-n}\right|\leqslant C \sum_{2 \leqslant|\alpha| \leqslant d} \sum_{i, k=1}^{n}|h_{i k, \alpha}|\cdot| x|^{|\alpha|+2-n}+C|x|^{d+3-n}.
\end{equation}
Moreover, we consider as in \cite{article4} the flux integral
$$
\begin{aligned}
	\mathcal{I}(p, \delta)= & \frac{4(n-1)}{n-2} \int_{\partial B_\delta \cap \mathbb{R}_{+}^n} \sum_{i=1}^n\left(|x|^{2-n} \partial_i G-G \partial_i|x|^{2-n}\right) \frac{x_i}{|x|} dx \\
	& -\int_{\partial B_\delta \cap \mathbb{R}_{+}^n} \sum_{i, k=1}^n|x|^{2-2 n}\left(|x|^2 \partial_k h_{i k}-2 n x_k h_{i k}\right) \frac{x_i}{|x|} dx,
\end{aligned}
$$
where $\delta>0$ is sufficiently small.

Finally, we define
\begin{equation}\label{phi2}
\phi_{2}=\eta_{\delta}\cdot(v_{\varepsilon}+\psi)+(1-\eta_{\delta})\varepsilon^{\frac{n-2}{2}}G
\end{equation}
as the global test function. By \cite{article4}*{Proposition 4.1}, we have the following energy estimate for $\phi_2$:
\begin{prop}\label{phi2 int}
If $\delta_{0}$ is sufficiently small, then
    	$$
	\begin{aligned}
		& \quad\,\int_{M}\left(\frac{4(n-1)}{n-2}\left|\nabla \phi_{2}\right|_g^2+R_g \phi_{2}^2\right) d V_g \\
		& \leqslant\mathcal{Q}(B^{n}, \partial B^{n})\left(\int_{\partial M} \phi_{2}^{\frac{2(n-1)}{n-2}} d \sigma_g\right)^{\frac{n-2}{n-1}} -\varepsilon^{n-2} \mathcal{I}(p, \delta)\\
  &\quad-\frac{\theta}{2} \sum_{i, k=1}^n \sum_{2 \leqslant|\alpha| \leqslant d}\left|h_{i k, \alpha}\right|^2 \varepsilon^{n-2} \int_{B_\delta \cap \mathbb{R}_{+}^n}(\varepsilon+|x|)^{2|\alpha|+2-2 n} d x \\
		& \quad+C \varepsilon^{n-2} \sum_{i, k=1}^n \sum_{2 \leqslant|\alpha| \leqslant d}\left|h_{i k, \alpha}\right| \delta^{-n+2+|\alpha|} +C \varepsilon^{n-2} \delta^{2 d+4-n}+C \delta^{-n} \varepsilon^{n}
	\end{aligned}
	$$
for all $ 0<2\varepsilon \leqslant \delta\leqslant\delta_{0}$. Here $\theta$ is a constant depending only on $(M, g_0)$.
\end{prop}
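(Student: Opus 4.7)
The plan is to follow the strategy used for Proposition \ref{phi1 int}, but now paste together two separate pieces of $\phi_2$. Split $M = \Omega_\delta \cup (M\setminus\Omega_\delta)$. On $\Omega_\delta$, where $\phi_2 = v_\varepsilon+\psi$, the argument is identical to the interior estimate in Proposition \ref{phi1 int}: combine Propositions \ref{ve energy1} and \ref{ve energy2} to produce the $\mathcal{Q}(B^n,\partial B^n)$-controlled boundary term, together with the negative sum $-\tfrac{\theta}{2}\sum_{|\alpha|\leqslant d,\,i,k}|h_{ik,\alpha}|^2\varepsilon^{n-2}\int(\varepsilon+|x|)^{2|\alpha|+2-2n}dx$ and the flux term $J_3$ on $\partial B_\delta\cap\mathbb{R}^n_+$. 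The umbilicity assumption $h_g=\pi_g=0$ combined with \eqref{h=pi} eliminates all $|\alpha|=1$ terms, so the sum automatically starts at $|\alpha|\geqslant 2$, matching the statement.

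On the exterior region, use the defining equations $\tfrac{4(n-1)}{n-2}\Delta_g G = R_g G$ and $G_\nu = 0$ to integrate by parts and convert
\begin{equation*}
\int_{M\setminus\Omega_\delta}\Bigl(\tfrac{4(n-1)}{n-2}|\nabla(\varepsilon^{\frac{n-2}{2}}G)|^2 + R_g \varepsilon^{n-2}G^2\Bigr)\,dV_g
\end{equation*}
into a boundary integral on $\partial\Omega_\delta \cap M^\circ$ involving $G\,G_\nu$. In the transition annulus $\{4\delta/3\leqslant|x|\leqslant 5\delta/3\}$, where $\phi_2$ is a convex combination of the two pieces, the asymptotic $v_\varepsilon = \varepsilon^{(n-2)/2}|x|^{2-n}(1+O(\varepsilon/|x|))$ together with the expansion \eqref{G} for $G-|x|^{2-n}$ shows that $v_\varepsilon+\psi$ and $\varepsilon^{(n-2)/2}G$ agree to leading order there; the resulting discrepancies are absorbed into the stated error terms $C\delta^{-n}\varepsilon^n$ (from cutoff derivatives $|\nabla\eta_\delta|\lesssim\delta^{-1}$ on an annulus of width $\sim\delta$) and $C\varepsilon^{n-2}\delta^{2d+4-n}$ (from truncating the Taylor expansion of $h$).

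The central step is the identification of the flux. Substituting the model functions $v_\varepsilon \sim \varepsilon^{(n-2)/2}|x|^{2-n}$ in $J_3$ and using that the exterior integration by parts produces a structurally identical boundary term built from $|x|^{2-n}$ and $G$ on $\partial B_\delta\cap\mathbb{R}^n_+$, one finds that the two boundary contributions combine precisely into $-\varepsilon^{n-2}\mathcal{I}(p,\delta)$, with the $|x|^{2-n}\partial_iG-G\partial_i|x|^{2-n}$ part coming from the Dirichlet energy and the $h_{ik}$ part coming from the metric corrections in the divergence computation. The mismatch between $v_\varepsilon$ and its model, controlled by \eqref{ve bound}, and between $G$ and $|x|^{2-n}$, controlled by \eqref{G}, generates exactly the error sum $C\varepsilon^{n-2}\sum_{|\alpha|\geqslant 2}|h_{ik,\alpha}|\delta^{|\alpha|+2-n}$.

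The main obstacle will be the bookkeeping of all three error scales simultaneously: (i) the cutoff derivative terms on the annulus $\{|x|\sim\delta\}$; (ii) the Green's-function correction terms governed by \eqref{G}, which are responsible for the $\delta^{|\alpha|-n+2}$ factors; and (iii) keeping track of the sign so that the positive-mass-type quantity $\mathcal{I}(p,\delta)$ appears with the correct negative coefficient $-\varepsilon^{n-2}$. None of these is individually subtle, but they must cancel in the right way; this is precisely the calculation carried out in \cite{article4}*{Proposition 4.1}, and the plan is to invoke it almost verbatim, the only change being that our conformal normalization $\det g = 1 + O(|x|^{2d+2})$ and the resulting $\operatorname{tr}(h)=O(|x|^{2d+2})$ play the same role here as in that reference.
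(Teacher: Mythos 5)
Your proposal is correct and takes essentially the same route as the paper: the paper simply cites Brendle--Chen \cite{article4}*{Proposition 4.1} for this estimate without reproving it, and your plan is also to invoke that result (after sketching the interior/exterior split, the Green's-function integration by parts, the transition-annulus matching, and the identification of the flux term as $-\varepsilon^{n-2}\mathcal{I}(p,\delta)$). Your sketch of what is inside that reference is accurate, including the observation that umbilicity kills the $|\alpha|=1$ terms via \eqref{h=pi} so the sums start at $|\alpha|\geqslant 2$.
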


\subsection*{Some estimates}
For the convenience of the subsequent sections, we provide proofs of several lemmas related to the estimates of test functions.
\begin{lem}\label{phi1 est}
If $\delta_{0}$ is sufficiently small, then
    $$\phi_i^{\frac{2(n-1)}{n-2}}\leqslant C(n)\varepsilon^{n-1}(\varepsilon+|x|)^{2-2n}$$
for $i=1,2$, and for all $ 0<2\varepsilon \leqslant \delta\leqslant\delta_{0}$ and $|x|<\delta$.
\end{lem}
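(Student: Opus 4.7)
The plan is essentially a one-line reduction to pointwise bounds already established above. In the region $|x|<\delta$, the cut-off satisfies $\eta_\delta(x)=\eta(|x|/\delta)=1$, since $|x|/\delta<1<4/3$ lies in the range where $\eta\equiv 1$ by the definition of $\eta$. Consequently, the definition (\ref{phi1}) gives $\phi_1=v_\varepsilon+\psi$, and similarly the definition (\ref{phi2}) gives $\phi_2=v_\varepsilon+\psi$, because the factor $1-\eta_\delta$ multiplying the Green's function contribution vanishes identically on this region. So the two cases collapse into one.

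Next, I would invoke the inequality $|\psi|\leqslant \tfrac{1}{2}v_\varepsilon$ derived in (\ref{phi<ve}) during the proof of Lemma \ref{ve est}. This yields $\phi_i(x)\leqslant v_\varepsilon(x)+|\psi(x)|\leqslant \tfrac{3}{2}v_\varepsilon(x)$ for $i=1,2$. Combined with the standard bubble estimate (\ref{ve bound}), namely $v_\varepsilon(x)\leqslant C(n)\varepsilon^{(n-2)/2}(\varepsilon+|x|)^{2-n}$, and raising to the power $\tfrac{2(n-1)}{n-2}$, one immediately obtains
$$\phi_i^{\frac{2(n-1)}{n-2}}\leqslant \left(\tfrac{3}{2}\right)^{\frac{2(n-1)}{n-2}} v_\varepsilon^{\frac{2(n-1)}{n-2}}\leqslant C(n)\,\varepsilon^{n-1}(\varepsilon+|x|)^{2-2n},$$
which is the claimed estimate.

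There is no substantive obstacle here: the lemma is a direct elementary corollary of (\ref{ve bound}) together with the smallness of $\psi$ relative to $v_\varepsilon$ from (\ref{phi<ve}). The only point worth double-checking is that $\{|x|<\delta\}$ sits inside the region $\{|x|\leqslant 4\delta/3\}$ on which $\eta_\delta\equiv 1$, so that neither $\phi_1$ nor $\phi_2$ picks up any contribution from the Green's function term or from the derivative of $\eta_\delta$ on the region in question. This containment is immediate from the choice $\eta|_{[0,4/3]}\equiv 1$, and $\delta\leqslant\delta_0$ can be taken small enough to ensure (\ref{phi<ve}) applies.
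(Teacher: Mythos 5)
Your proof is correct. Both you and the paper first observe that on $\{|x|<\delta\}$ the cutoff $\eta_\delta\equiv 1$, so $\phi_1=\phi_2=v_\varepsilon+\psi$, and both rely on the inequality $|\psi|\leqslant\tfrac12 v_\varepsilon$ from (\ref{phi<ve}). The difference is in the final step: the paper writes $\phi_i^{\frac{2(n-1)}{n-2}}\leqslant \bigl|(v_\varepsilon+\psi)^{\frac{2(n-1)}{n-2}}-v_\varepsilon^{\frac{2(n-1)}{n-2}}\bigr|+v_\varepsilon^{\frac{2(n-1)}{n-2}}$ and invokes the difference estimate of Lemma~\ref{ve est} together with (\ref{ve bound}), whereas you bypass Lemma~\ref{ve est} entirely by bounding $\phi_i\leqslant\tfrac32 v_\varepsilon$ pointwise and then raising to the power $\frac{2(n-1)}{n-2}$. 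Your route is a bit more elementary and direct for this particular lemma, since the full strength of the $(\varepsilon+|x|)^{3-2n}$ difference estimate is not needed here; the paper's route has the advantage that it reuses Lemma~\ref{ve est}, which is needed elsewhere anyway (e.g.\ in Lemma~\ref{phi est inside} and in the $E_3$ term in Section~\ref{sec4}), so nothing extra is really being established. Both yield the stated bound, and your exponent bookkeeping $\bigl(\tfrac{n-2}{2}\cdot\tfrac{2(n-1)}{n-2}=n-1$, $(2-n)\cdot\tfrac{2(n-1)}{n-2}=2-2n\bigr)$ is right.
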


\begin{proof}
By Lemma \ref{ve est}, $|\psi|\leqslant \frac{1}{2}v_{\varepsilon}$. So when $|x|<\delta$, $\phi_1=\phi_2 = v_{\varepsilon}+\psi\geqslant \frac{1}{2}v_{\varepsilon}>0$. Thus by Lemma \ref{ve est} and (\ref{ve bound}),
$$\begin{aligned}
\phi_i^{\frac{2(n-1)}{n-2}}& \leqslant \left|(v_{\varepsilon}+\psi)^{\frac{2(n-1)}{n-2}}-v_{\varepsilon}^{\frac{2(n-1)}{n-2}}\right| + \left|v_{\varepsilon}^{\frac{2(n-1)}{n-2}}\right|\\
& \leqslant C\varepsilon^{n-1}(\varepsilon+|x|)^{3-2n} + C\varepsilon^{n-1}(\varepsilon+|x|)^{2-2n}\\
& \leqslant C\varepsilon^{n-1}(\varepsilon+|x|)^{2-2n}.
	\end{aligned}$$
\end{proof}

\begin{lem}\label{phi est outside}
If $\delta_{0}$ is sufficiently small, then
    $$
	\int_{\partial M\setminus\Omega_\delta}\phi_{i}^{\frac{2(n-1)}{n-2}}d\sigma_g=O(\varepsilon^{n-1}\delta^{2-2n})
	$$
for $i=1,2$, and for all $ 0<2\varepsilon \leqslant \delta\leqslant\delta_{0}$.
\end{lem}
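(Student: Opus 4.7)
The strategy is to split $\partial M \setminus \Omega_\delta$ into the transition annulus $A_\delta := (\Omega_{5\delta/3} \setminus \Omega_\delta) \cap \partial M$, on which the cutoff $\eta_\delta$ interpolates between $0$ and $1$, and the exterior $E_\delta := \partial M \setminus \Omega_{5\delta/3}$, on which $\eta_\delta \equiv 0$. Since $\phi_1$ has support in $\Omega_{5\delta/3}$, it vanishes on $E_\delta$ and only the $A_\delta$-contribution matters; for $\phi_2$, both pieces must be estimated.

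On $A_\delta$ the plan is to combine the pointwise bound $|\psi| \leq \frac12 v_\varepsilon$ from (\ref{phi<ve}) with (\ref{ve bound}) and, for $\phi_2$, with the expansion (\ref{G}), which gives $G(x) \leq C|x|^{2-n} \leq C\delta^{2-n}$ when $|x| \geq \delta$. Together these yield
\[
\phi_i(x) \leq \tfrac32 v_\varepsilon(x) + C\varepsilon^{(n-2)/2}\delta^{2-n} \leq C\varepsilon^{(n-2)/2}\delta^{2-n}
\]
on $A_\delta$, so $\phi_i^{2(n-1)/(n-2)} \leq C\varepsilon^{n-1}\delta^{2-2n}$. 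Multiplying by the $(n-1)$-dimensional volume $|A_\delta| = O(\delta^{n-1})$ (and noting that $d\sigma_g$ is comparable to the Euclidean $d\sigma$ in Fermi coordinates for $\delta$ small) gives an annular contribution of $O(\varepsilon^{n-1}\delta^{1-n})$, which is dominated by the asserted $O(\varepsilon^{n-1}\delta^{2-2n})$.

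For $\phi_2$ on $E_\delta$, I would exploit that $\phi_2 = \varepsilon^{(n-2)/2}G$ there and use (\ref{G}) together with the smoothness, hence boundedness, of $G$ on the compact set $\overline{M}\setminus\Omega_{5\delta/3}$ (which excludes the singularity $p$) to conclude that $G \leq C\delta^{2-n}$ uniformly on $E_\delta$ once $\delta$ is small enough. Then $\phi_2^{2(n-1)/(n-2)} \leq C\varepsilon^{n-1}\delta^{2-2n}$, and since $\partial M$ has bounded area the integral over $E_\delta$ is also $O(\varepsilon^{n-1}\delta^{2-2n})$. Summing the two contributions finishes the proof. As this lemma is essentially a coarse size estimate of the test functions, I anticipate no real analytic obstacle; the only point requiring mild care is merging the local expansion (\ref{G}) of $G$ near $p$ with the global boundedness of $G$ on the compact remainder, which is routine.
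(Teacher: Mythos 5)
Your proposal is correct and follows essentially the same route as the paper: split $\partial M\setminus\Omega_\delta$ into a transition annulus and an exterior piece, bound $v_\varepsilon+\psi$ by $C\varepsilon^{(n-2)/2}\delta^{2-n}$ via $|\psi|\leqslant\frac12 v_\varepsilon$ and (\ref{ve bound}), bound $G\leqslant C\delta^{2-n}$ via the expansion (\ref{G}) near $p$ together with boundedness of $G$ away from $p$, and integrate. The only cosmetic differences are that the paper reduces to $\phi_2$ by noting $0\leqslant\phi_1\leqslant\phi_2$ and splits at radius $2\delta$, whereas you treat $\phi_1$ and $\phi_2$ separately, split at $5\delta/3$, and along the way observe the sharper annular bound $O(\varepsilon^{n-1}\delta^{1-n})$ for the transition region; neither change is substantive. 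One small imprecision worth noting: ``boundedness of $G$ on $\overline{M}\setminus\Omega_{5\delta/3}$'' should not be read as a $\delta$-independent constant, since that set swallows the singularity as $\delta\to 0$; the correct statement, which you do gesture at, is that $G$ is uniformly bounded on the fixed compact set $\overline{M}\setminus\Omega_{\rho_0}$ for some $\delta$-independent $\rho_0$, and (\ref{G}) gives $G\leqslant C\delta^{2-n}$ on the intermediate shell $\{5\delta/3\leqslant |x|\leqslant\rho_0\}$.
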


\begin{proof}
We only need to prove this result for $\phi_{2}$, as $0\leqslant\phi_1\leqslant\phi_{2}$.

    On $\Omega_{2\delta}\setminus\Omega_{\delta}$,
$$
\phi_2=\eta_{\delta}\left(v_{\varepsilon}+\psi\right)+(1-\eta_{\delta})\varepsilon^{\frac{n-2}{2}}G.
$$
%	Take $\delta$ small enough, and assume $|\psi|\leqslant \frac{1}{2}v_{\varepsilon}$. 
From (\ref{G}) we have
 	$$
  \begin{aligned}
      G & \leqslant  |x|^{2-n}+C|x|^{4-n}+C|x|^{5-n}+\cdots+C|x|^{3-n+d}\\
      &\leqslant  \delta^{2-n}+C\left(\delta^{4-n}+\delta^{5-n}+\cdots+\delta^{3-n+d}\right).
  \end{aligned}$$
So we can take $\delta$ small enough and get $G\leqslant  C\delta^{2-n}.$ Thus, together with $|\psi|\leqslant \frac{1}{2}v_{\varepsilon}$ we have
$$\begin{aligned}
\phi_{2}&\leqslant  Cv_{\varepsilon}+C\varepsilon^{\frac{n-2}{2}}G\\
		&=  C\varepsilon^{\frac{n-2}{2}}\left(\left((\varepsilon+x_n)^{2}+\sum_{a=1}^{n-1}x_a^{2}\right)^{-\frac{n-2}{2}}+CG\right)\\
		&\leqslant  C\varepsilon^{\frac{n-2}{2}}\left(\left(\varepsilon^{2}+|x|^{2}\right)^{-\frac{n-2}{2}}+C\delta^{2-n}\right)\\
		&\leqslant  C\varepsilon^{\frac{n-2}{2}}\delta^{2-n}.
\end{aligned}$$

  On $\partial M\setminus\Omega_{2\delta}$, 
  $$\phi_{2}=\varepsilon^{\frac{n-2}{2}}G.$$
  We can still take $\delta$ small enough and get $G\leqslant  C\delta^{2-n}$, thus
	$$\phi_{2}\leqslant  C\varepsilon^{\frac{n-2}{2}}\delta^{2-n}.$$

Therefore,
 	$$
	\int_{\partial M\setminus\Omega_\delta}\phi_{2}^{\frac{2(n-1)}{n-2}}\leqslant  C\varepsilon^{n-1}\delta^{-2(n-1)}
	$$
for $ 0<2\varepsilon \leqslant \delta\leqslant\delta_{0}$.

\end{proof}

\begin{lem}\label{phi est inside}
If $\delta_{0}$ is sufficiently small, then
    $$\int_{\partial M}\phi_{i}^{\frac{2(n-1)}{n-2}}d\sigma_{g}=A(n)+O(\varepsilon)+O(\varepsilon^{n-1}\delta^{1-n})+O(\varepsilon^{n-1}\delta^{2-2n})$$
for $i=1,2$, and for all $ 0<2\varepsilon \leqslant \delta\leqslant\delta_{0}$. Here
\begin{equation}\label{A}
    A(n)=\omega_{n-2}\int_{0}^{\infty}\frac{r^{n-2}}{\left(1+r^{2}\right)^{n-1}}dr
\end{equation}
is a constant which only depends on $n$.
\end{lem}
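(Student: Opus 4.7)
The plan is to split the integral $\int_{\partial M}\phi_i^{\frac{2(n-1)}{n-2}}\,d\sigma_g = \int_{\Omega_\delta\cap\partial M} + \int_{\partial M\setminus\Omega_\delta}$ and estimate each piece. The second piece is already handled: by Lemma \ref{phi est outside} it contributes $O(\varepsilon^{n-1}\delta^{2-2n})$, which accounts for the last error term.

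For the inner piece, since $\eta_\delta\equiv 1$ on $\{|x|<\delta\}$, both test functions reduce to $\phi_i = v_\varepsilon+\psi$ there, so I only need to analyze $\int_{B_\delta\cap\partial\mathbb{R}^n_+}(v_\varepsilon+\psi)^{\frac{2(n-1)}{n-2}}\,d\sigma_g$. I would first replace the integrand by $v_\varepsilon^{\frac{2(n-1)}{n-2}}$: by Lemma \ref{ve est} the pointwise error is bounded by $C\varepsilon^{n-1}(\varepsilon+|x|)^{3-2n}$, and the substitution $r=\varepsilon s$ converts
\[
\int_0^\delta \varepsilon^{n-1}(\varepsilon+r)^{3-2n} r^{n-2}\,dr = \varepsilon\int_0^{\delta/\varepsilon}\frac{s^{n-2}}{(1+s)^{2n-3}}\,ds,
\]
which is $O(\varepsilon)$ since the integrand decays like $s^{1-n}$ at infinity and $n\geqslant 3$. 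This is the first source of the $O(\varepsilon)$ term.

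Next I would trade $d\sigma_g$ for $d\sigma$. Because $g_{nn}=1$ and $g_{in}=\delta_{in}$ in Fermi coordinates, the boundary volume form satisfies $\sqrt{\det g_{ab}|_{\partial M}(x')} = \sqrt{\det g(x',0)} = 1+O(|x'|^{2d+2})$ by the conformal normalization. A direct scaling $x'=\varepsilon y$ shows
\[
\int_{B_\delta\cap\partial\mathbb{R}^n_+} v_\varepsilon^{\frac{2(n-1)}{n-2}}|x'|^{2d+2}\,d\sigma = \varepsilon^{2d+2}\int_0^{\delta/\varepsilon}\frac{s^{2d+n}}{(1+s^2)^{n-1}}\,ds,
\]
whose integrand grows at most like $s$ at infinity, so this contribution is at worst $\varepsilon^{n-2}\delta$ or $\varepsilon^{n-1}\delta^2$ in the even/odd cases respectively — both $o(\varepsilon)$ in view of $2\varepsilon\leqslant\delta\leqslant\delta_0$.

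Finally I compute $\int_{B_\delta\cap\partial\mathbb{R}^n_+}v_\varepsilon^{\frac{2(n-1)}{n-2}}\,d\sigma$ directly: on $x_n=0$ this equals $\omega_{n-2}\int_0^\delta \varepsilon^{n-1}(\varepsilon^2+r^2)^{-(n-1)}r^{n-2}\,dr$, and the scaling $r=\varepsilon s$ produces $\omega_{n-2}\int_0^{\delta/\varepsilon}\frac{s^{n-2}}{(1+s^2)^{n-1}}\,ds$. Extending the upper limit to $\infty$ gives $A(n)$, and the discarded tail is controlled by $\omega_{n-2}\int_{\delta/\varepsilon}^\infty s^{-n}\,ds = O(\varepsilon^{n-1}\delta^{1-n})$, yielding the middle error term. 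Combining all four contributions produces the stated expansion. There is no genuine obstacle here, only careful bookkeeping; the only thing to check carefully is that the metric-correction error really is $o(\varepsilon)$ in all dimensions $n\geqslant 3$, which follows from $2d+2\geqslant n$.
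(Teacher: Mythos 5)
Your proposal follows essentially the same route as the paper's proof: peel off the outer piece with Lemma~\ref{phi est outside}, replace $(v_\varepsilon+\psi)^{\frac{2(n-1)}{n-2}}$ by $v_\varepsilon^{\frac{2(n-1)}{n-2}}$ at cost $O(\varepsilon)$ via Lemma~\ref{ve est}, and extend the explicit $v_\varepsilon$-integral to infinity with tail $O(\varepsilon^{n-1}\delta^{1-n})$. Your one genuine addition is the explicit control of the $d\sigma_g\to d\sigma$ conversion, which the paper's proof passes over silently in its opening ``it suffices to prove the local estimate'' reduction; making this step explicit is a welcome improvement in completeness.

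That said, the arithmetic in your extra step has a few slips. Since $d=\left[\frac{n-2}{2}\right]$, the integrand $\frac{s^{2d+n}}{(1+s^2)^{n-1}}$ decays like $s^{2d+2-n}$ at infinity, which is $s^{0}$ for $n$ even and $s^{-1}$ for $n$ odd --- it never grows like $s$. Carrying this through gives contributions of order $\varepsilon^{n-1}\delta$ (even) and $\varepsilon^{n-1}\log(\delta/\varepsilon)$ (odd), not the $\varepsilon^{n-2}\delta$ and $\varepsilon^{n-1}\delta^{2}$ you wrote. Both corrected bounds are indeed $o(\varepsilon)$ for $n\geqslant 3$, so the conclusion is unaffected. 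More importantly, your closing justification ``which follows from $2d+2\geqslant n$'' is false when $n$ is odd (there $2d+2=n-1<n$); what actually saves the estimate is simply that the exponent of $\varepsilon$ in $\varepsilon^{n-1}\delta$ or $\varepsilon^{n-1}\log(\delta/\varepsilon)$ exceeds $1$ once $n\geqslant 3$. The argument is correct, but this final sanity check should be restated.
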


\begin{proof}
By Lemma \ref{phi est outside}, it suffices to prove the following local estimate
$$\int_{B_{\delta}\cap\partial\mathbb{R}_{+}^{n}}(v_{\varepsilon}+\psi)^{\frac{2(n-1)}{n-2}}d\sigma=A(n)+O(\varepsilon)+O(\varepsilon^{n-1}\delta^{1-n}).$$

    By Lemma \ref{ve est},
    	$$\begin{aligned}
&\quad\,\left|\int_{B_{\delta}\cap\partial\mathbb{R}_{+}^{n}}(v_{\varepsilon}+\psi)^{\frac{2(n-1)}{n-2}}d\sigma-\int_{B_{\delta}\cap\partial\mathbb{R}_{+}^{n}}v_{\varepsilon}^{\frac{2(n-1)}{n-2}}d\sigma\right|\\
		&\leqslant C\int_{B_{\delta}\cap\partial\mathbb{R}_{+}^{n}} \varepsilon^{n-1}(\varepsilon+|x|)^{3-2n}d\sigma\\
		&\leqslant C\varepsilon \int_{B_{\delta/\varepsilon}^{n-1}}(1+|x|)^{3-2n}dx\\
		&\leqslant C\varepsilon\int_{0}^{\delta/\varepsilon}\frac{r^{n-2}}{(1+r)^{2n-3}}dr\leqslant C\varepsilon.
	\end{aligned}$$
While
 	$$\begin{aligned}
		\int_{B_{\delta}\cap\partial \mathbb{R}_{+}^n}v_{\varepsilon}^{\frac{2(n-1)}{n-2}}d\sigma&=\int_{B_{\delta}^{n-1}}\varepsilon^{n-1}\left(\varepsilon^{2}+|x|^{2}\right)^{-(n-1)}dx\qquad (\text{since } x_n=0)\\
		&=\omega_{n-2}\int_{0}^{\delta/\varepsilon}\frac{r^{n-2}}{\left(1+r^{2}\right)^{n-1}}dr\\
		&=\omega_{n-2}\int_{0}^{\infty}\frac{r^{n-2}}{\left(1+r^{2}\right)^{n-1}}dr-\omega_{n-2}\int_{\delta/\varepsilon}^{\infty}\frac{r^{n-2}}{\left(1+r^{2}\right)^{n-1}}dr,
	\end{aligned}$$
where
$$\int_{\delta/\varepsilon}^{\infty}\frac{r^{n-2}}{\left(1+r^{2}\right)^{n-1}}dr\leqslant \int_{\delta/\varepsilon}^{\infty}r^{-n}dr\leqslant C(\delta/\varepsilon)^{1-n}.$$
  So
$$\int_{B_{\delta}\cap\partial\mathbb{R}_{+}^{n}}(v_{\varepsilon}+\psi)^{\frac{2(n-1)}{n-2}}d\sigma=A(n)+O(\varepsilon)+O(\varepsilon^{n-1}\delta^{1-n}).$$
\end{proof}

\begin{lem}\label{f est}
    Let $A=A(n)$ as given in (\ref{A}). Let $\alpha\in(0,1)$ be a constant, $f_{1}, f_{2}$ be functions defined in a neighborhood of $0$, with $f_{i}>A>0$ for $i=1,2$ and 
    $ \lim_{x\to 0}\left(f_{1}-f_{2}\right)(x)=0.$ Then there exists a constant $C(\alpha, n)$ such that
    $$f_{1}^{\alpha}\leqslant f_{2}^{\alpha}+C(\alpha, n)|f_{1}-f_{2}|$$
    for $|x|\ll1$.
\end{lem}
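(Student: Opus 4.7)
The plan is to invoke the Mean Value Theorem on $\varphi(t)=t^{\alpha}$, which is $C^{1}$ on $(0,\infty)$ with derivative $\varphi'(t)=\alpha t^{\alpha-1}$. The hypothesis $f_{i}>A>0$, valid in a neighborhood of $0$, guarantees that for $|x|\ll 1$ both values $f_{1}(x)$ and $f_{2}(x)$ lie in the interval $[A,\infty)$, which is where the whole estimate will take place.

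The key step is then as follows. For fixed such $x$, apply the MVT to $\varphi$ on the closed interval with endpoints $f_{1}(x)$ and $f_{2}(x)$: there is a point $\xi=\xi(x)$ in this interval, in particular with $\xi\geqslant A$, such that
$$
f_{1}^{\alpha}-f_{2}^{\alpha}=\alpha\,\xi^{\alpha-1}(f_{1}-f_{2}).
$$
Since $\alpha-1<0$, the function $t\mapsto t^{\alpha-1}$ is decreasing on $(0,\infty)$, so $\xi^{\alpha-1}\leqslant A^{\alpha-1}$. Taking absolute values yields the two-sided bound
$$
\bigl|f_{1}^{\alpha}-f_{2}^{\alpha}\bigr|\leqslant \alpha A^{\alpha-1}|f_{1}-f_{2}|,
$$
which in particular gives $f_{1}^{\alpha}\leqslant f_{2}^{\alpha}+\alpha A^{\alpha-1}|f_{1}-f_{2}|$. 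Setting $C(\alpha,n)=\alpha A(n)^{\alpha-1}$, which depends only on $\alpha$ and $n$ because $A=A(n)$ does, concludes the proof.

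There is no real obstacle: the argument is a one-line consequence of the MVT, and its only subtlety is that the uniform lower bound $f_{i}>A$ keeps us away from the singularity of $\varphi'$ at $0$, so that the factor $\xi^{\alpha-1}$ can be controlled by a purely dimensional constant. Notably, the assumption $\lim_{x\to 0}(f_{1}-f_{2})(x)=0$ does not enter the derivation of the inequality itself; it is presumably required later, when the lemma is applied to compare quantities such as $\bigl(\int_{\partial M}\phi_{i}^{\frac{2(n-1)}{n-2}}\,d\sigma_{g}\bigr)^{\frac{n-2}{n-1}}$ to the reference constant $A(n)^{\frac{n-2}{n-1}}$, so that the error term $|f_{1}-f_{2}|$ is genuinely negligible.
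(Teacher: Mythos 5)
Your proof is correct and follows essentially the same route as the paper: apply the Mean Value Theorem to $t\mapsto t^{\alpha}$ between $f_{1}(x)$ and $f_{2}(x)$ and control the factor $\xi^{\alpha-1}$ by bounding $\xi$ away from $0$. The only (minor) difference is in how the intermediate point $\xi$ is bounded from below: you observe directly that $\xi$ is a convex combination of $f_{1}(x)$ and $f_{2}(x)$, each $>A$, so $\xi>A$ and you may take $C(\alpha,n)=\alpha A(n)^{\alpha-1}$; the paper instead writes $\xi=f_{2}+\gamma(f_{1}-f_{2})$ and invokes the hypothesis $\lim_{x\to 0}(f_{1}-f_{2})=0$ to get $|f_{1}-f_{2}|<A/2$ and hence $\xi>A/2$. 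Your observation that the limit hypothesis is not actually needed for the inequality itself is accurate, and your constant is marginally sharper; both arguments are otherwise the same.
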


\begin{proof}
By the Mean Value Theorem,
    	$$\left|f_{1}^{\alpha}(x)-f_{2}^{\alpha}(x)\right|=\alpha\left|f_{1}(x)-f_{2}(x)\right|\Big|f_{2}(x)+\gamma(x)\left(f_{1}(x)-f_{2}(x)\right)\Big|^{\alpha-1}$$
for some $\gamma(x)\in(0,1)$. When $|x|\ll1$, we have $|f_{1}(x)-f_{2}(x)|<\frac{A}{2}$, and $$f_{2}(x)+\gamma(x)\left(f_{1}(x)-f_{2}(x)\right)>\frac{A}{2}.$$
Thus,
   $$f_{1}(x)^{\alpha}\leqslant f_{2}(x)^{\alpha}+C(\alpha,A)|f_{1}(x)-f_{2}(x)|.$$
\end{proof}

\section{Manifolds with non-umbilic boundary}\label{sec4}
In this section we focus on manifolds with non-umbilic boundary. Moreover, we assume the function $f$ is somewhere positive and achieves a global maximum at a non-umbilic point $p\in\partial M$. By rescaling the function we may assume that $\displaystyle 1=f(p)=\max_{\partial M} f$. 

When $n\geqslant 5$, we assume $f$ satisfies condition (\ref{**}), that is, we further assume the function $f$ satisfies $\Delta_{\partial M} f(p)\leqslant c(n)\|\pi_{g}-h_{g} g\|^2(p)$. As Escobar pointed out in \cite{article15}, condition (\ref{**}) is conformally invariant. Thus if we take a conformal metric with $h_{g} =0$, the inequality becomes 
$$\Delta_{\partial M} f(p)\leqslant c(n)\|\pi_{g}\|^{2}(p).$$

\begin{proof}[Proof of Theorem \ref{n=5, nonumbilic}]
We will use the local test function $\phi_{1}$ defined as (\ref{phi1}). Recall from Proposition \ref{phi1 int}, for $\delta_{0}$ sufficiently small and for all $ 0<2\varepsilon \leqslant \delta\leqslant\delta_{0}$,
$$
\begin{aligned}
	E_{g}(\phi_{1}) & \leqslant  \mathcal{Q}(B^{n}, \partial B^n)\left(\int_{\partial M}\phi_{1}^{\frac{2(n-1)}{n-2}} d \sigma_{g} \right)^{\frac{n-2}{n-1}}\\
 &\quad -\theta\|\pi_{g}(p)\|^{2} \varepsilon^{n-2} \int_{B_\delta \cap \mathbb{R}_{+}^n}(\varepsilon+|x|)^{4-2 n} d x + C(\delta)\varepsilon^{n-2}.
\end{aligned}
$$
Our main goal is to replace $\left(\int_{\partial M}\phi_{1}^{\frac{2(n-1)}{n-2}} d \sigma_{g} \right)^{\frac{n-2}{n-1}}$ by $\left(\int_{\partial M}f\phi_{1}^{\frac{2(n-1)}{n-2}} d \sigma_{g} \right)^{\frac{n-2}{n-1}}$ and estimate their difference.

When $n\geqslant 5$, we have 
    $$\begin{aligned}
    \varepsilon^{n-2} \int_{B_\delta \cap \mathbb{R}_{+}^n}(\varepsilon+|x|)^{4-2 n} dx = \varepsilon^{2}\omega_{n-1}\int_{0}^{\delta/\varepsilon}\frac{r^{n-1}}{(1+r)^{2n-4}}dr>B(n)\varepsilon^{2},
    \end{aligned}$$
where 
$$B(n)=\omega_{n-1}\int_{0}^{1}\frac{r^{n-1}}{(1+r)^{2n-4}}dr$$ 
is a constant which only depends on $n$. So
\begin{equation}\label{E phi1}
    E_{g}(\phi_{1})\leqslant \mathcal{Q}(B^{n}, \partial B^n)\left(\int_{\partial M}\phi_{1}^{\frac{2(n-1)}{n-2}} d \sigma_{g} \right)^{\frac{n-2}{n-1}}-\theta B(n)\|\pi_{g}(p)\|^{2}\varepsilon^{2} + C(\delta)\varepsilon^{n-2}.
\end{equation}

For $|x|\ll1$, we have $f(x)-1=\frac{1}{2}\sum_{i,j=1}^{n-1}f_{ij}(0)x_{i}x_{j}+O(|x|^{3})$. Then
$$\begin{aligned}
&\quad\,\int_{\partial M}\phi_{1}^{\frac{2(n-1)}{n-2}} d \sigma_{g}\\
&=\int_{\partial M}f\phi_{1}^{\frac{2(n-1)}{n-2}} d \sigma_{g}+\int_{\partial M}(1-f)\phi_{1}^{\frac{2(n-1)}{n-2}} d \sigma_{g}\\
&=\int_{\partial M}f\phi_{1}^{\frac{2(n-1)}{n-2}} d \sigma_{g}+\int_{\Omega_{\delta}\cap\partial M}(1-f)\phi_{1}^{\frac{2(n-1)}{n-2}} d \sigma_{g}+E_{1}\\
&=\int_{\partial M}f\phi_{1}^{\frac{2(n-1)}{n-2}} d \sigma_{g}-\frac{1}{2}\sum_{i,j=1}^{n-1}\int_{\Omega_{\delta}\cap\partial M}f_{ij}(0)x_{i}x_{j}\phi_{1}^{\frac{2(n-1)}{n-2}}d\sigma_{g}+E_{1}+E_{2}\\
&=\int_{\partial M}f\phi_{1}^{\frac{2(n-1)}{n-2}} d \sigma_{g}-\frac{1}{2}\sum_{i,j=1}^{n-1}\int_{B_{\delta}\cap\partial\mathbb{R}_{+}^{n}}f_{ij}(0)x_{i}x_{j}v_{\varepsilon}^{\frac{2(n-1)}{n-2}}d\sigma+E_{1}+E_{2}+E_{3}+O(\varepsilon^{3}),
\end{aligned}$$
where $O(\varepsilon^{3})$ comes from the conformal change of the coordinates, and
$$
\begin{aligned}
E_{1} & =\int_{\partial M\setminus\Omega_{\delta}}(1-f)\phi_{1}^{\frac{2(n-1)}{n-2}}d\sigma_{g},\\
E_{2} & =\int_{\Omega_{\delta}\cap\partial M}O(|x|^{3}) \phi_{1}^{\frac{2(n-1)}{n-2}}d\sigma_{g},\\
E_{3} & = -\frac12\sum_{i,j=1}^{n-1}\int_{B_{\delta}\cap\partial\mathbb{R}_{+}^{n}}\left[(v_{\varepsilon}+\psi)^{\frac{2(n-1)}{n-2}}-v_{\varepsilon}^{\frac{2(n-1)}{n-2}}\right]f_{ij}(0)x_{i}x_{j}d\sigma.\\
%&=\int_{B_{\delta}\cap\partial\mathbb{R}_{+}^{n}}O\left(v_{\varepsilon}^{\frac{2(n-1)}{n-2}}(\varepsilon+|x|)\right)f_{ij}(0)x_{i}x_{j}d\sigma.
\end{aligned}
$$

By the symmetry of the ball, we get 
$$
\begin{aligned}
    &\quad\,-\frac{1}{2}\sum_{i,j=1}^{n-1}\int_{B_{\delta}\cap\partial\mathbb{R}_{+}^{n}}f_{ij}(0)x_{i}x_{j}v_{\varepsilon}^{\frac{2(n-1)}{n-2}}d\sigma\\
    & =-\frac{1}{2}\sum_{i=1}^{n-1}f_{ii}(0)\int_{B_{\delta}\cap\partial\mathbb{R}_{+}^{n}}x_{i}^2\,\varepsilon^{n-1}(\varepsilon^{2}+|x|^{2})^{1-n}d\sigma \quad (\text{since } x_n=0)\\
%    & =-\frac{1}{2}\sum_{i=1}^{n-1}f_{ii}(0)\int_{B_{\delta}\cap\partial\mathbb{R}_{+}^{n}}x_{i}^2\,\varepsilon^{n-1}(\varepsilon^{2}+|x|^{2})^{1-n}d\sigma\\
    & =\frac12\Delta_{\partial M} f(0)\int_{B_{\delta}\cap\partial\mathbb{R}_{+}^{n}}\frac{|x|^{2}}{n-1}\varepsilon^{n-1}(\varepsilon^{2}+|x|^{2})^{1-n}d\sigma\\
    &=\frac{\Delta_{\partial M} f(0)\varepsilon^{2}}{2(n-1)}\omega_{n-2}\int_{0}^{\delta/\varepsilon}\frac{r^{n}}{(1+r^{2})^{n-1}}dr<D(n)\Delta_{\partial M} f(0)\varepsilon^{2},
\end{aligned}
$$
where $D(n)=\frac{\omega_{n-2}}{2(n-1)}\int_{0}^{\infty}\frac{r^{n}}{(1+r^{2})^{n-1}}dr<\infty$ as long as $n>3$.\\
By Lemma \ref{phi est outside},
$$
|E_{1}|\leqslant C\varepsilon^{n-1}\delta^{2-2n}\leqslant C\varepsilon^{3}.
$$
By Lemma \ref{phi1 est},
$$
\begin{aligned}
    |E_{2}|\leqslant C\int_{B_{\delta}\cap\partial\mathbb{R}_{+}^{n}}\varepsilon^{n-1}(\varepsilon+|x|)^{2-2n}|x|^{3}d\sigma\leqslant C\varepsilon^{3}\int_{0}^{\delta/\varepsilon}\frac{r^{n+1}}{(1+r)^{2n-2}}dr\leqslant C\varepsilon^{3}.
\end{aligned}
$$
By Lemma \ref{ve est},
$$
\begin{aligned}
    |E_{3}|& \leqslant C\sum_{i,j=1}^{n-1}\int_{B_{\delta}\cap\partial\mathbb{R}_{+}^{n}}\varepsilon^{n-1}(\varepsilon+|x|)^{3-2n}|f_{ij}(0)|\cdot|x_{i}x_{j}|d\sigma\\ 
    & \leqslant C\int_{B_{\delta}\cap\partial\mathbb{R}_{+}^{n}}\varepsilon^{n-1}(\varepsilon+|x|)^{3-2n}|x|^{2}d\sigma\\
    & \leqslant C\varepsilon^{3}\int_{0}^{\delta/\varepsilon}\frac{r^{n}}{(1+r)^{2n-3}}dr \quad\leqslant C\varepsilon^{3}.
\end{aligned}
$$
Note the above two integrals in $E_2, E_3$ are finite as long as $n>4$. Thus, when we combine everything above, we obtain
$$
\left|\int_{\partial M}\phi_{1}^{\frac{2(n-1)}{n-2}} d \sigma_{g} - \int_{\partial M}f\phi_{1}^{\frac{2(n-1)}{n-2}} d \sigma_{g}\right|\leqslant D(n)\Delta_{\partial M} f(p)\varepsilon^{2} + C\varepsilon^{3}.
$$

Denote $\alpha = \frac{n-2}{n-1}$ and 
$$
f_1(\varepsilon) = \int_{\partial M}\phi_{1}^{\frac{2(n-1)}{n-2}} d \sigma_{g},\quad f_2(\varepsilon) = \int_{\partial M}f\phi_{1}^{\frac{2(n-1)}{n-2}} d \sigma_{g}.
$$
By Lemma \ref{f est} we get
$$
\begin{aligned}
    \left(\int_{\partial M}\phi_{1}^{\frac{2(n-1)}{n-2}} d \sigma_{g} \right)^{\frac{n-2}{n-1}}\leqslant \left(\int_{\partial M}f\phi_{1}^{\frac{2(n-1)}{n-2}} d \sigma_{g} \right)^{\frac{n-2}{n-1}}+C(n)D(n)\Delta_{\partial M} f(p)\varepsilon^{2}+C\varepsilon^{3}.
\end{aligned}
$$

Therefore, from (\ref{E phi1}) we get
$$
\begin{aligned}
	E_{g}(\phi_{1}) 
	\leqslant  & \mathcal{Q}(B^{n}, \partial B^n)\left(\int_{\partial M}f\phi_{1}^{\frac{2(n-1)}{n-2}} d \sigma_{g} \right)^{\frac{n-2}{n-1}}\\
 & +\left(\mathcal{Q}(B^{n}, \partial B^n)C(n)D(n)\Delta_{\partial M} f(p)-\theta B(n)\|\pi_{g}(p)\|^{2}\right)\varepsilon^{2} + C\varepsilon^{3}.
\end{aligned}$$
Take $\bar{c}(n) = \frac{\theta B(n)}{\mathcal{Q}(B^{n}, \partial B^n)C(n)D(n)}$, and note $\theta$ is a constant depending only on $(M, g_0)$. When $\Delta_{\partial M} f(p)< \bar{c}(n)\|\pi_{g}\|^{2}(p)$, we have
$$
\mathcal{Q}(B^{n}, \partial B^n)C(n)D(n)\Delta_{\partial M} f(p)-\theta B(n)\|\pi_{g}(p)\|^{2}<0,
$$
so we can choose $\varepsilon\ll1$ to make
\begin{equation}\label{Ephi1}
    E_{g}(\phi_{1})<\mathcal{Q}(B^{n},\partial B^{n})\left(\int_{\partial M}f\phi_{1}^{\frac{2(n-1)}{n-2}} d \sigma_{g} \right)^{\frac{n-2}{n-1}}.
\end{equation}
The function 
$$u=\left(\int_{\partial M} f \phi_{1}^{\frac{2(n-1)}{n-2}} d \sigma_g\right)^{-\frac{n-2}{2(n-1)}}\cdot \phi_{1}$$ 
then lies in the constraint set $C_{\frac{n}{n-2},f}$ and by (\ref{Ephi1}),
$$
E_{g}(u) = \left(\int_{\partial M} f \phi_{1}^{\frac{2(n-1)}{n-2}} d \sigma_g\right)^{-\frac{n-2}{n-1}}\cdot E_{g}(\phi_{1}) < \mathcal{Q}(B^{n},\partial B^{n}).
$$
This means
 $$
 E\left(\frac{n}{n-2}, f\right)<\mathcal{Q}(B^{n},\partial B^{n}).
 $$
Together with Theorem \ref{criterion}, this completes the proof of Theorem \ref{n=5, nonumbilic}.
\end{proof}
\begin{rmk}
    As long as (\ref{Ephi1}) holds for the test function $\phi_1$ when $\varepsilon \ll 1$, we can apply the above proof and use Theorem \ref{criterion} to establish the existence of a conformal metric. Therefore, our primary focus moving forward will be to show (\ref{Ephi1}).
\end{rmk}

\begin{proof}[Proof of Theorem \ref{n=4, nonumbilic}]
When $n=4$,
 $$\begin{aligned}
    & \quad \varepsilon^{n-2} \int_{B_\delta \cap \mathbb{R}_{+}^n}(\varepsilon+|x|)^{4-2 n} dx\\
    & = \varepsilon^{2}\omega_{3}\int_{0}^{\delta/\varepsilon}\frac{r^{3}}{(1+r)^{4}}dr\\
    & > \varepsilon^{2}\omega_{3}\int_{1}^{\delta/\varepsilon}\frac{r^{3}}{(1+r)^{4}}dr\\
    & \geqslant \varepsilon^{2}\omega_{3}\int_{1}^{\delta/\varepsilon}\frac{1}{16r}dr =
    C\varepsilon^{2}\log(\delta/\varepsilon).
    \end{aligned}$$
So by Proposition \ref{phi1 int}, for $\delta_{0}$ sufficiently small and for all $ 0<2\varepsilon \leqslant \delta\leqslant\delta_{0}$,
\begin{equation}\label{E; n=4}
 E_{g}(\phi_{1})\leqslant \mathcal{Q}(B^{4}, \partial B^4)\left(\int_{\partial M}\phi_{1}^{3} d \sigma_{g} \right)^{\frac{2}{3}}-C\theta \|\pi_{g}(p)\|^{2}\varepsilon^{2}\log(\delta/\varepsilon) + C(\delta)\varepsilon^{2}.
\end{equation}

For $|x|\ll1$, we have $f(x)=1+O(|x|^{2})$, so
$$
\int_{\partial M}\phi_{1}^{3} d \sigma_{g}=\int_{\partial M}f\phi_{1}^{3} d \sigma_{g}+E_{4}+E_{5},
$$
where
$$
\begin{aligned}
E_{4} & = \int_{\Omega_{\delta}}O(|x|^{2})\phi_{1}^{3} d \sigma_{g},\\
E_{5} & = \int_{M\setminus\Omega_{\delta}}(1-f)\phi_{1}^{3} d \sigma_{g}.
\end{aligned}$$
By Lemma \ref{phi1 est},
$$
\begin{aligned}
    |E_{4}|\leqslant C\int_{B_{\delta}\cap\partial\mathbb{R}_{+}^{4}}\varepsilon^{3}(\varepsilon+|x|)^{-6}|x|^{2}d\sigma\leqslant C\varepsilon^{2}\int_{0}^{\delta/\varepsilon}\frac{r^{4}}{(1+r)^{6}}dr\leqslant C\varepsilon^{2}.
\end{aligned}
$$
By Lemma \ref{phi est outside},
$$|E_{5}|\leqslant C\varepsilon^{3}\delta^{-6}\leqslant C\varepsilon^{2}.$$
Thus, when we combine everything above, we obtain
$$
\left|\int_{\partial M}\phi_{1}^{3} d \sigma_{g}-\int_{\partial M}f\phi_{1}^{3} d \sigma_{g}\right|\leqslant C\varepsilon^{2}.
$$

Therefore, by (\ref{E; n=4}) and Lemma \ref{f est} we get
$$
\begin{aligned}
 E_{g}(\phi_{1}) & \leqslant \mathcal{Q}(B^{4}, \partial B^4)\left(\int_{\partial M}f\phi_{1}^{3} d \sigma_{g} \right)^{\frac{2}{3}}\\
 & \quad -\left(C\theta\|\pi_{g}(p)\|^{2}\log(\delta/\varepsilon) - C(\delta) - C\mathcal{Q}(B^{4}, \partial B^4)\right)\varepsilon^{2}.
\end{aligned}
$$
For fixed $\delta$, we can take $\varepsilon\ll1$ to make
$$
C\theta\|\pi_{g}(p)\|^{2}\log(\delta/\varepsilon) - C(\delta) - C\mathcal{Q}(B^{4}, \partial B^4)>0,
$$
and as a result,
$$E_{g}(\phi_{1})<\mathcal{Q}(B^{4}, \partial B^4)\left(\int_{\partial M}f\phi_{1}^{3} d \sigma_{g} \right)^{\frac{2}{3}}.$$
\end{proof}

\section{Manifolds with umbilic boundary}\label{sec5}
In this section we focus on manifolds with umbilic boundary for $n\geqslant6$. Moreover, we assume the function $f$ is somewhere positive and achieves a global maximum at $p\in\partial M$, where $\nabla^k f(p)=0$ for $k=1, \cdots, n-2$. By rescaling the function we may assume that $\displaystyle 1=f(p)=\max_{\partial M} f$. 

For manifolds with umbilic boundary, we have $\pi_{g_0} \equiv 0$ since $h_{g_0}=0$. Under the conformal change $g = u^{\frac{4}{n-2}}g_0$, the boundary remains totally umbilic, meaning the second fundamental form satisfies $\pi_{ab}(x) = h_g(x) g_{ab}(x)$ for all $x \in \partial M$. As established in Section~\ref{sec2}, the conformal Fermi coordinates ensure $h_g(x) = O(|x|^{2d+1})$. Therefore, $\pi_{ab}(x) = O(|x|^{2d+1})$. 

In these coordinates, $\partial_n g_{ab} = -2\pi_{ab}$, which implies $\partial_n h_{ab}(x) = O(|x|^{2d+1})$ on $\partial \mathbb{R}_{+}^n$. Because $H_{ik}(x)$ is defined as a polynomial of degree $d$, its normal derivative $\partial_n H_{ik}(x)$ is a polynomial of degree at most $d-1$. Since the function it approximates vanishes to order $2d+1$, the polynomial must be identically zero, yielding $\partial_n H_{ik}(x) = 0$ on $\partial \mathbb{R}_{+}^n$. 

In particular, evaluated at the origin, we have $\pi_g(0)=0$. So by \eqref{h=pi},
$$\sum_{|\alpha|=1}\sum_{i,k=1}^{n}|h_{ik,\alpha}|^{2}={4}\|\pi_{g}(0)\|^{2}=0.$$ 
Thus in the Taylor expansion of $h$, the subscript starts from $|\alpha|=2$.

We will make use of the local test function $\phi_{2}$ defined as (\ref{phi2}). Let us start with the following estimate.

\begin{prop}\label{5.1}
If $\delta_{0}$ is sufficiently small, then
    $$
	\begin{aligned}
		&\quad\, \int_M\left(\frac{4(n-1)}{n-2}\left|\nabla \phi_{2}\right|_g^2+R_g \phi_{2}^2\right) d V_g \\
		&\leqslant \mathcal{Q}(B^{n}, \partial B^{n})\left(\int_{\partial M} f\phi_{2}^{\frac{2(n-1)}{n-2}} d \sigma_g\right)^{\frac{n-2}{n-1}}\\
  &\quad-\frac{\theta}{2} \sum_{i, k=1}^n \sum_{2 \leqslant|\alpha| \leqslant d}\left|h_{i k, \alpha}\right|^2 \varepsilon^{n-2} \int_{B_\delta \cap \mathbb{R}_{+}^n}(\varepsilon+|x|)^{2|\alpha|+2-2 n} d x \\
		&\quad - \varepsilon^{n-2} \mathcal{I}(p, \delta)+C \varepsilon^{n-2} \sum_{i, k=1}^n \sum_{2 \leqslant|\alpha| \leqslant d}\left|h_{i k, \alpha}\right| \delta^{-n+2+|\alpha|}+C \varepsilon^{n-2} \delta^{2 d+4-n}\\
		&\quad+C\varepsilon^{n-1}\log(\delta/\varepsilon)+C\varepsilon^{n-1}\delta^{2-2n}+C\varepsilon^{n}\delta^{-n}.
	\end{aligned}
	$$
for all $ 0<2\varepsilon \leqslant \delta\leqslant\delta_{0}$. Here $\theta$ is a constant depending only on $(M, g_0)$.
\end{prop}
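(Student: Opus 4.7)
The plan is to start from Proposition~\ref{phi2 int} and convert the integral $\int_{\partial M}\phi_{2}^{\frac{2(n-1)}{n-2}}\,d\sigma_g$ into $\int_{\partial M}f\phi_{2}^{\frac{2(n-1)}{n-2}}\,d\sigma_g$, absorbing all the errors into additional terms of the required orders. The strategy parallels the proof of Theorem~\ref{n=5, nonumbilic} in the previous section, except that now condition~(\ref{*}) forces $f$ to be unusually flat at $p$, which is exactly what is needed for the umbilic case at $n\geqslant 6$.

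First, write
$$\int_{\partial M}\phi_{2}^{\frac{2(n-1)}{n-2}}\,d\sigma_g = \int_{\partial M}f\phi_{2}^{\frac{2(n-1)}{n-2}}\,d\sigma_g + \int_{\Omega_{\delta}\cap\partial M}(1-f)\phi_{2}^{\frac{2(n-1)}{n-2}}\,d\sigma_g + \int_{\partial M\setminus\Omega_{\delta}}(1-f)\phi_{2}^{\frac{2(n-1)}{n-2}}\,d\sigma_g.$$
The last integral is controlled by Lemma~\ref{phi est outside} and contributes $O(\varepsilon^{n-1}\delta^{2-2n})$. The inside integral is the essential point: since $\nabla^{k}f(p)=0$ for $k=1,\ldots,n-2$, Taylor's theorem gives $|1-f(x)|\leqslant C|x|^{n-1}$ for $|x|\ll 1$. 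Combining this with Lemma~\ref{phi1 est} and changing to polar coordinates with $r=|x|/\varepsilon$, one obtains
$$\Bigl|\int_{\Omega_{\delta}\cap\partial M}(1-f)\phi_{2}^{\frac{2(n-1)}{n-2}}\,d\sigma_g\Bigr|\leqslant C\varepsilon^{n-1}\int_{0}^{\delta/\varepsilon}\frac{r^{2n-3}}{(1+r)^{2n-2}}\,dr\leqslant C\varepsilon^{n-1}\log(\delta/\varepsilon),$$
the log arising because the exponent of $r$ in the resulting integrand is precisely $-1$ at infinity.

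Next, apply Lemma~\ref{f est} with $\alpha=(n-2)/(n-1)$ and $f_{1},f_{2}$ the two integrals above; both are close to $A(n)>0$ by Lemma~\ref{phi est inside}, so the hypothesis is met for $\varepsilon$ and $\delta$ sufficiently small. This upgrades the pointwise bound to the inequality
$$\Bigl(\int_{\partial M}\phi_{2}^{\frac{2(n-1)}{n-2}}\,d\sigma_g\Bigr)^{\frac{n-2}{n-1}} \leqslant \Bigl(\int_{\partial M}f\phi_{2}^{\frac{2(n-1)}{n-2}}\,d\sigma_g\Bigr)^{\frac{n-2}{n-1}} + C\varepsilon^{n-1}\log(\delta/\varepsilon)+C\varepsilon^{n-1}\delta^{2-2n}.$$
Substituting this into the conclusion of Proposition~\ref{phi2 int} and absorbing the factor $\mathcal{Q}(B^{n},\partial B^{n})$ into the constants produces exactly the stated estimate.

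The main technical obstacle is to confirm that the vanishing order prescribed by condition~(\ref{*}) is sharp enough to keep the $f$-induced error strictly below the order of the negative ``mass" term $-\varepsilon^{n-2}\mathcal{I}(p,\delta)$: the $\varepsilon^{n-1}\log(\delta/\varepsilon)$ obtained above is indeed negligible compared to $\varepsilon^{n-2}$ once $\delta$ is fixed and $\varepsilon\to 0$. Any weaker vanishing of $f$ at $p$ would instead produce an $\varepsilon^{n-2}$ error that could swallow the $\mathcal{I}(p,\delta)$ term, so condition~(\ref{*}) is precisely what the method can afford. The remaining verifications (that the constants are dimensional, that $\delta_{0}$ can be chosen uniformly, and that the exterior contribution from $\partial M\setminus\Omega_{\delta}$ is harmless) are routine and use only the boundedness of $f$ on the compact manifold $\partial M$ together with Lemma~\ref{phi est outside}.
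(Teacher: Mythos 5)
Your proof is correct and follows essentially the same route as the paper's: decompose $\int_{\partial M}\phi_2^{\frac{2(n-1)}{n-2}}d\sigma_g$ into the $f$-weighted integral plus inside and outside error terms, use $|1-f|\leqslant C|x|^{n-1}$ (from condition (\ref{*})) together with Lemma \ref{phi1 est} to get the $\varepsilon^{n-1}\log(\delta/\varepsilon)$ bound on the inside piece, use Lemma \ref{phi est outside} for the outside piece, upgrade to the $\frac{n-2}{n-1}$ power via Lemma \ref{f est}, and substitute into Proposition \ref{phi2 int}. Your closing discussion of why $\varepsilon^{n-1}\log(\delta/\varepsilon)$ is subcritical relative to $\varepsilon^{n-2}\mathcal{I}(p,\delta)$ is accurate and is implicit in how the paper later applies this proposition.
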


\begin{proof}
Our main goal is to replace $\left(\int_{\partial M}\phi_{2}^{\frac{2(n-1)}{n-2}} d \sigma_{g} \right)^{\frac{n-2}{n-1}}$ by $\left(\int_{\partial M}f\phi_{2}^{\frac{2(n-1)}{n-2}} d \sigma_{g} \right)^{\frac{n-2}{n-1}}$ in Proposition \ref{phi2 int} and estimate their difference. We have
$$
\begin{aligned}
    	\int_{\partial M}\phi_{2}^{\frac{2(n-1)}{n-2}}d\sigma_g & =\int_{\partial M}\left[f\phi_{2}^{\frac{2(n-1)}{n-2}}+(1-f)\phi_{2}^{\frac{2(n-1)}{n-2}}\right]d\sigma_{g}\\
%	&=\int_{\partial M}f\phi_{2}^{\frac{2(n-1)}{n-2}}d\sigma_{g}+\int_{\Omega_\delta \cap \partial M}(1-f)\phi_{2}^{\frac{2(n-1)}{n-2}}d\sigma_{g}+\int_{\partial M-\Omega_\delta}(1-f)\phi_{2}^{\frac{2(n-1)}{n-2}}d\sigma_{g}\\
 &=\int_{\partial M}f\phi_{2}^{\frac{2(n-1)}{n-2}}d\sigma_{g}+E_{1}+E_{2},
\end{aligned}$$
where
$$\begin{aligned}
E_{1} & = \int_{\Omega_\delta \cap \partial M}(1-f)\phi_{2}^{\frac{2(n-1)}{n-2}}d\sigma_g,\\
E_{2} & = \int_{\partial M\setminus\Omega_\delta}(1-f)\phi_{2}^{\frac{2(n-1)}{n-2}}d\sigma_{g}.
\end{aligned}$$

For $\left|x\right|<\delta\ll1$, we have $\left|1-f(x)\right| \leqslant C\left|\nabla^{n-1}f(x_0)\right|\cdot\left|x\right|^{n-1}$ for some $x_0$ between $x$ and $0$. So by Lemma \ref{phi1 est},
$$\begin{aligned}
\left|E_{1}\right|&=\left|\int_{\Omega_\delta \cap \partial M}(1-f)\phi_{1}^{\frac{2(n-1)}{n-2}}d\sigma_g\right|\\
& \leqslant  C\int_{\Omega_{\delta}\cap \partial M} |x|^{n-1}\varepsilon^{n-1}(\varepsilon+|x|)^{2-2n}d\sigma_{g}\\
		&\leqslant  C\int_{B_{\delta}\cap\partial\mathbb{R}_{+}^{n}} |x|^{n-1}\varepsilon^{n-1}\left(\varepsilon^{2}+|x|^{2}\right)^{-(n-1)}d\sigma + C\varepsilon^{n-1}\quad (\text{since } x_n=0)\\
  &\leqslant  C\varepsilon^{n-1}\int_{0}^{\delta/\varepsilon}\frac{r^{2n-3}}{\left(1+r^{2}\right)^{n-1}}dr =  C\varepsilon^{n-1}\left(\int_{0}^{1}+\int_{1}^{\delta/\varepsilon}\right)\frac{r^{2n-3}}{\left(1+r^{2}\right)^{n-1}}dr\\
  &\leqslant  C\varepsilon^{n-1}\left(A+\log(\delta/\varepsilon)\right)\\
  & \leqslant  C\varepsilon^{n-1}\log(\delta/\varepsilon).
\end{aligned}$$
By Lemma \ref{phi est outside},
$$|E_{2}|\leqslant C\varepsilon^{n-1}\delta^{2-2n}.
$$

So together with Lemma \ref{f est} we get
$$\begin{aligned}
\left(\int_{\partial M}\phi_{2}^{\frac{2(n-1)}{n-2}}d\sigma_g\right)^{\frac{n-2}{n-1}}\leqslant  \left(\int_{\partial M}f\phi_{2}^{\frac{2(n-1)}{n-2}}d\sigma_{g}\right)^{\frac{n-2}{n-1}}+C\varepsilon^{n-1}\log(\delta/\varepsilon)+C\varepsilon^{n-1}\delta^{2-2n}.
\end{aligned}$$
From this, the assertion follows easily.
\end{proof}

We follow \cite{article3} to consider the set
$$\mathcal{Z}=\{p\in M:\limsup _{x \rightarrow p} d(p, x)^{2-d}\left|W_g\right|(x)=0\},$$
where $W_g$ denotes the Weyl tensor of $(M,g)$, and $d=[(n-2)/2]$. In other words, a point $p\in M$ belongs to $\mathcal{Z}$
if and only if $\nabla^mW_g(p)=0$ for all $m=0, 1, \cdots, d-2$. Note that the set $\mathcal{Z}$ is invariant under a conformal change of the metric. Also note here we require $n\geqslant 6$ in order for $d-2\geqslant0$.

In fact, the set $\mathcal{Z}$ can also be characterized using the algebraic Weyl tensor $Z_{ijkl}$. More precisely, the relationship between the Schouten tensor $B$ and the algebraic Schouten tensor $A$ is given by
$$
B_{i k}=R_{i k}-\frac{R_g}{2(n-1)} g_{ik}=\frac{1}{2} A_{i k}+O\left(|x|^{d-1}\right)+O(|\partial(h \partial h)|).
$$
While the relationship between the Weyl tensor $W$ and the algebraic Weyl tensor $Z$ is given by
$$
\begin{aligned}
    W_{i j k l}&=R_{i j k l}-\frac{1}{n-2}\left(B_{i k} g_{j l}-B_{i l} g_{j k}-B_{j k} g_{i l}+B_{j l} g_{i k}\right)\\
    &=-\frac{1}{2}Z_{ijkl}+O(|x|^{d-1})+O(|\partial (h\partial h)|).
\end{aligned}
$$
If $p \in \mathcal{Z}$, then by definition $\nabla^m W_g(p)=0$ for all $m \leqslant d-2$, which implies $W_{i j k l}(x) = O(|x|^{d-1})$. Substituting this into the equation above yields $Z_{ijkl}(x) = O(|x|^{d-1})$. Recall that $H_{ik}$ is defined as a polynomial of degree $d$, which means $Z_{ijkl}$, being comprised of the second derivatives of $H_{ik}$, is a polynomial of degree at most $d-2$. Since $Z_{ijkl}$ is a polynomial of degree $d-2$ that satisfies $Z_{ijkl}(x) = O(|x|^{d-1})$ as $x \to 0$, it must be identically the zero polynomial. Therefore, $p\in\mathcal{Z}$ implies that $Z_{ijkl}=0$ everywhere in $\mathbb{R}_{+}^n$.

%We will consider two different cases based on whether $p\in\partial M$ lies in $\mathcal{Z}$ or not:
%\begin{enumerate}	
%	\item[(I)] If $p\notin\mathcal{Z}$, then there exists at least one term $h_{ik,\alpha}\neq 0.$ So we use $-\sum |h_{ik,\alpha}|^{2}$ as the main negative term. 
	
%	\item[(II)] If $p\in\mathcal{Z}$, $Z_{ijkl}=0$, by Proposition \ref{vanishing  H}, $h_{ik,\alpha}=0$ for all $i,k=1,\cdots,n$ and $|\alpha|=2,\cdots,n$. In this case, we use $-\mathcal{I}(p,\delta)$ as the main negative term.
%\end{enumerate}

\begin{proof}[Proof of Theorem \ref{umbilic}]
We will consider two different cases based on whether the point $p\in\partial M$ lies in $\mathcal{Z}$ or not.

\textit{Case I.} If $p\in\mathcal{Z}$, then $Z_{ijkl}=0$ in $\mathbb{R}_{+}^n$. As established above, the umbilic boundary geometry ensures $\partial_n H_{ik} = 0$ on $\partial \mathbb{R}_{+}^n$. Therefore, by Proposition \ref{vanishing  H}, we conclude that $H \equiv 0$ in $\mathbb{R}_{+}^n$. Consequently, $\sum_{i,k=1}^{n}\sum_{2\leqslant|\alpha|\leqslant d}|h_{ik,\alpha}|^{2}=0.$ Thus from Proposition \ref{5.1},

%\textit{Case I.} If $p\in\mathcal{Z}$, then $Z_{ijkl}=0$ in $\mathbb{R}_{+}^n$. So by Proposition \ref{vanishing  H}, we have $\sum_{i,k=1}^{n}\sum_{2\leqslant|\alpha|\leqslant d}|h_{ik,\alpha}|^{2}=0.$ Thus from Proposition \ref{5.1},
$$
\begin{aligned}
    E_{g}(\phi_{2}) & \leqslant \mathcal{Q}(B^{n},\partial B^{n})\left(\int_{\partial M} f\phi_{2}^{\frac{2(n-1)}{n-2}} d \sigma_g\right)^{\frac{n-2}{n-1}}\\
	&\quad - \left(\mathcal{I}(p, \delta)-C\delta^{2d+4-n}-C\varepsilon\log(\delta/\varepsilon)-C\varepsilon\delta^{2-2n}-C\varepsilon^{2}\delta^{-n}\right)\varepsilon^{n-2}.
\end{aligned}
$$

We follow \cite{article4} to consider the manifold $(M\setminus\{p\}, G^{4/(n-2)}g)$. Recall that $g = u^{4/(n-2)}g_0$ with $u(p)=1$. Thus, the Green's functions are related simply by $G = u^{-1}G_{g_0}$, where $G_{g_0}$ is the Green's function for the conformal Laplacian of $g_0$. Consequently, the blow-up metric is exactly $G^{4/(n-2)}g = G_{g_0}^{4/(n-2)}g_0$. Because the background metric $g_0$ has exactly vanishing mean curvature everywhere, this blow-up manifold is scalar flat and its boundary is exactly totally geodesic. After doubling this manifold, we obtain an asymptotically flat manifold with zero scalar curvature, denoted as $(\tilde{M}, \tilde{g})$. By \cite{article4}*{Proposition 4.3}, its ADM mass $m(\tilde{g})$ is well-defined and 
$$m(\tilde{g})=C_0\mathcal{I}(p,\delta)+O(\delta^{2d+n-4})$$ 
for some fixed $C_0>0$. Since $(M, g)$ is not conformally equivalent to the ball, $(M\setminus\{p\}, G^{4/(n-2)}g)$ is not conformally equivalent to the ball either. So by the positive mass theorem \cite{article23}, we must have $m(\tilde{g})>0$. (In particular, $m(\tilde{g})\ne0$ due to the rigidity result.)

Thus, for fixed $C_0>0$, we can take $\delta$ small enough and get
$$
\mathcal{I}(p,\delta)-C\delta^{2d+4-n}=C_0^{-1}m(\tilde{g})-C'\delta^{2d+4-n}>0.
$$
Then for fixed $\delta$, we can take $\varepsilon\ll1$ to make
$$\mathcal{I}(p, \delta)-C\delta^{2d+4-n}-C\varepsilon\log(\delta/\varepsilon)-C\varepsilon\delta^{2-2n}-C\varepsilon^{2}\delta^{-n}>0,$$
and as a result,
$$E_{g}(\phi_{2})<\mathcal{Q}(B^{n},\partial B^{n})\left(\int_{\partial M}f\phi_{2}^{\frac{2(n-1)}{n-2}} d \sigma_{g} \right)^{\frac{n-2}{n-1}}.$$\\

\textit{Case II.} If $p\notin\mathcal{Z}$, then there exists a smallest $|\alpha_{0}|\leqslant d$ with $h_{ik,\alpha_{0}}\neq 0.$ 

If $|\alpha_{0}|<\frac{n-2}{2}$, 
$$\begin{aligned}
    &\quad\,\varepsilon^{n-2}\int_{B_\delta \cap \mathbb{R}_{+}^n}(\varepsilon+|x|)^{2|\alpha_{0}|+2-2n}dx\\
    & = \varepsilon^{2|\alpha_{0}|}\int_{0}^{\delta/\varepsilon}\frac{r^{n-1}}{(1+r)^{2n-2-2|\alpha_{0}|}}dr\\
    & > \varepsilon^{2|\alpha_{0}|}\int_{0}^{1}\frac{r^{n-1}}{(1+r)^{2n-2-2|\alpha_{0}|}}dr\\
   & = C\varepsilon^{2|\alpha_{0}|}.
\end{aligned}$$
So by Proposition \ref{5.1},
$$
\begin{aligned}
    E_{g}(\phi_{2}) & <   \mathcal{Q}(B^{n},\partial B^{n})\left(\int_{\partial M} f\phi_{2}^{\frac{2(n-1)}{n-2}} d \sigma_g\right)^{\frac{n-2}{n-1}}\\
	&\quad -\left(C\varepsilon^{2|\alpha_{0}|-(n-2)}-C'\varepsilon\log(\delta/\varepsilon)-C_{1}(\delta)-C_{2}(\delta)\varepsilon-C_3(\delta)\varepsilon^{2}\right)\varepsilon^{n-2}.
\end{aligned}
$$
Note that $2|\alpha_{0}|-(n-2)<0$, so for fixed $\delta$, we can take $\varepsilon\ll1$ to make
$$
C\varepsilon^{2|\alpha_{0}|-(n-2)}-C'\varepsilon\log(\delta/\varepsilon)-C_{1}(\delta)-C_{2}(\delta)\varepsilon-C_3(\delta)\varepsilon^{2}>0,
$$
and as a result,
$$E_{g}(\phi_{2})<\mathcal{Q}(B^{n},\partial B^{n})\left(\int_{\partial M}f\phi_{2}^{\frac{2(n-1)}{n-2}} d \sigma_{g} \right)^{\frac{n-2}{n-1}}.$$

If $|\alpha_{0}|=\frac{n-2}{2}$, then $d=\frac{n-2}{2}.$
$$ 
\begin{aligned}
    &\quad\,\varepsilon^{n-2}\int_{B_\delta \cap \mathbb{R}_{+}^n}(\varepsilon+|x|)^{2|\alpha_{0}|+2-2n}dx\\
    & =\varepsilon^{n-2}\int_{0}^{\delta/\varepsilon}\frac{r^{n-1}}{(1+r)^{n}}dr\\
    & > \varepsilon^{n-2}\int_{1}^{\delta/\varepsilon}\frac{r^{n-1}}{(1+r)^{n}}dr\\
    & >C\varepsilon^{n-2}\log(\delta/\varepsilon).
\end{aligned}
$$
So by Proposition \ref{5.1},
$$
\begin{aligned}
    E_{g}(\phi_{2}) & <   \mathcal{Q}(B^{n},\partial B^{n})\left(\int_{\partial M} f\phi_{2}^{\frac{2(n-1)}{n-2}} d \sigma_g\right)^{\frac{n-2}{n-1}}\\
	&\quad -\left(C\log(\delta/\varepsilon)-C'\varepsilon\log(\delta/\varepsilon)-C_{1}(\delta)-C_{2}(\delta)\varepsilon-C_3(\delta)\varepsilon^{2}\right)\varepsilon^{n-2}.
\end{aligned}
$$
For fixed $\delta$, we can take $\varepsilon\ll1$ to make
$$
C\log(\delta/\varepsilon)-C'\varepsilon\log(\delta/\varepsilon)-C_{1}(\delta)-C_{2}(\delta)\varepsilon-C_3(\delta)\varepsilon^{2}>0,
$$
and as a result,
$$E_{g}(\phi_{2})<\mathcal{Q}(B^{n},\partial B^{n})\left(\int_{\partial M}f\phi_{2}^{\frac{2(n-1)}{n-2}} d \sigma_{g} \right)^{\frac{n-2}{n-1}}.$$
\end{proof}

\begin{rmk}
In fact, the assumption that ``$M$ is not conformally equivalent to the ball" can be slightly weakened. By \cite{article4}*{Proposition 4.3}, the limit $\lim_{\delta\rightarrow0}\mathcal{I}(p, \delta)$ exists. Therefore, the only requirement for the proof above is
$$
\lim_{\delta\rightarrow0}\mathcal{I}(p, \delta) > 0.
$$
\end{rmk}

%\section*{Data availability}
%Data sharing is not applicable to this article.

%\section*{Conflict of interest}
%The authors have no conflict of interest to declare that is relevant to the content of this article.

\section*{Acknowledgments}
The first author is deeply grateful to Professor Weimin Sheng for his initial guidance and for introducing him to this field. The second author sincerely thanks Professor Rick Schoen for his invaluable insights through many fruitful discussions and for his constant encouragement.

\bibliographystyle{amsplain}
%    Insert the bibliography data here.
\begin{bibdiv}
\begin{biblist}
%\bibitem{article10}Z. Djadli, A. Malchiodi and M. O. Ahmedou, The prescribed boundary mean curvature problem on $\mathbb{B}^4$.J. Differential Equations 206 (2004), no. 2, 373–398.

\bib{article10}{article}{
    author={Ahmedou, Mohameden Ould},
   author={Djadli, Zindine},
   author={Malchiodi, Andrea},
   title={The prescribed boundary mean curvature problem on $\mathbb{B}^4$},
   journal={J. Differential Equations},
   volume={206},
   date={2004},
   number={2},
   pages={373--398},
   issn={0022-0396},
   review={\MR{2095819}},
   doi={10.1016/j.jde.2004.04.016},
}

%\bibitem{article27}S. Almaraz, Convergence of scalar-flat metrics on manifolds with boundary under a {Y}amabe-type flow. J. Differential Equations 259(7) (2015), 2626--2694.

\bib{article27}{article}{
   author={Almaraz, S\'ergio},
   title={Convergence of scalar-flat metrics on manifolds with boundary
   under a Yamabe-type flow},
   journal={J. Differential Equations},
   volume={259},
   date={2015},
   number={7},
   pages={2626--2694},
   issn={0022-0396},
   review={\MR{3360653}},
   doi={10.1016/j.jde.2015.04.011},
}

%\bibitem{article2}S. Almaraz, E. Barbosa and L. de Lima, A positive mass theorem for asymptotically flat manifolds with a non-compact boundary. Comm. Anal. Geom. 24(4) (2016), 673–715.

%\bib{article2}{article}{
%   author={Almaraz, S\'ergio},
%   author={Barbosa, Ezequiel},
%   author={de Lima, Levi Lopes},
%   title={A positive mass theorem for asymptotically flat manifolds with a
%   non-compact boundary},
%   journal={Comm. Anal. Geom.},
%   volume={24},
%   date={2016},
%   number={4},
%   pages={673--715},
%   issn={1019-8385},
%   review={\MR{3570413}},
%   doi={10.4310/CAG.2016.v24.n4.a1},
%}

%\bibitem{article1}T. Aubin, Équations différentielles non linéaires et problème de Yamabe concernant la courbure scalaire. J. Math. Pures Appl. (9), 55(3) (1976), 269–296.

\bib{article1}{article}{
   author={Aubin, Thierry},
   title={\'Equations diff\'erentielles non lin\'eaires et probl\`eme de
   Yamabe concernant la courbure scalaire},
   journal={J. Math. Pures Appl. (9)},
   volume={55},
   date={1976},
   number={3},
   pages={269--296},
   issn={0021-7824},
   review={\MR{0431287}},
}

%\bibitem{article3}S. Brendle, Convergence of the Yamabe flow in dimension 6 and higher. Invent. Math. 170(3) (2007), 541–576.

\bib{article3}{article}{
   author={Brendle, Simon},
   title={Convergence of the Yamabe flow in dimension 6 and higher},
   journal={Invent. Math.},
   volume={170},
   date={2007},
   number={3},
   pages={541--576},
   issn={0020-9910},
   review={\MR{2357502}},
   doi={10.1007/s00222-007-0074-x},
}

%\bibitem{article4}S. Brendle and S. Chen, An existence theorem for the Yamabe problem on manifolds with boundary. J. Eur. Math. Soc. (JEMS) 16(5) (2014), 991–1016.

\bib{article4}{article}{
   author={Brendle, Simon},
   author={Chen, Szu-Yu Sophie},
   title={An existence theorem for the Yamabe problem on manifolds with
   boundary},
   journal={J. Eur. Math. Soc. (JEMS)},
   volume={16},
   date={2014},
   number={5},
   pages={991--1016},
   issn={1435-9855},
   review={\MR{3210959}},
   doi={10.4171/JEMS/453},
}

%\bibitem{article7}S.-Y. A. Chang, X. Xu and P. C. Yang, A perturbation result for prescribing mean curvature. Math. Ann. 310 (1998), no. 3, 473–496.

\bib{article7}{article}{
   author={Chang, Sun-Yung A.},
   author={Xu, Xingwang},
   author={Yang, Paul C.},
   title={A perturbation result for prescribing mean curvature},
   journal={Math. Ann.},
   volume={310},
   date={1998},
   number={3},
   pages={473--496},
   issn={0025-5831},
   review={\MR{1612266}},
   doi={10.1007/s002080050157},
}

%\bibitem{article5}S. Chen, Conformal deformation to scalar flat metrics with constant mean curvature on the boundary in higher dimensions, preprint (2009); arXiv: 0912.1302.

\bib{article5}{article}{
   author={Chen, Szu-Yu Sophie},
   title={Conformal deformation to scalar flat metrics with constant mean curvature on the boundary in higher dimensions},
   journal={arXiv: 0912.1302},
   date={2010},
}

%\bibitem{article8}X. Chen, P. Ho, L. Sun, Prescribed scalar curvature plus mean curvature flows in compact manifolds with boundary of negative conformal invariant. Ann. Global Anal. Geom. 53 (2018), 121–150.

\bib{article8}{article}{
   author={Chen, Xuezhang},
   author={Ho, Pak Tung},
   author={Sun, Liming},
   title={Prescribed scalar curvature plus mean curvature flows in compact
   manifolds with boundary of negative conformal invariant},
   journal={Ann. Global Anal. Geom.},
   volume={53},
   date={2018},
   number={1},
   pages={121--150},
   issn={0232-704X},
   review={\MR{3746518}},
   doi={10.1007/s10455-017-9570-4},
}

%\bibitem{article9}X. Chen, L. Sun, Existence of conformal metrics with constant scalar curvature and constant boundary mean curvature on compact manifolds. Commun. Contemp. Math. 21(3) (2019) 1850021, 51 pp.

\bib{article9}{article}{
   author={Chen, Xuezhang},
   author={Sun, Liming},
   title={Existence of conformal metrics with constant scalar curvature and
   constant boundary mean curvature on compact manifolds},
   journal={Commun. Contemp. Math.},
   volume={21},
   date={2019},
   number={3},
   pages={1850021, 51},
   issn={0219-1997},
   review={\MR{3947063}},
   doi={10.1142/S0219199718500219},
}

%\bibitem{article6}P. Cherrier, Problèmes de Neumann non linéaires sur les variétés Riemannienes. J. Funct. Anal. 57 (1984), 154–206.

\bib{article6}{article}{
   author={Cherrier, Pascal},
   title={Probl\`emes de Neumann non lin\'eaires sur les vari\'et\'es
   riemanniennes},
   language={French, with English summary},
   journal={J. Funct. Anal.},
   volume={57},
   date={1984},
   number={2},
   pages={154--206},
   issn={0022-1236},
   review={\MR{0749522}},
   doi={10.1016/0022-1236(84)90094-6},
}

%\bibitem{article11}J. Escobar, Sharp constant in a Sobolev trace inequality. Indiana Univ. Math. J. 37(3) (1988), 687–698.

\bib{article11}{article}{
   author={Escobar, Jos\'e F.},
   title={Sharp constant in a Sobolev trace inequality},
   journal={Indiana Univ. Math. J.},
   volume={37},
   date={1988},
   number={3},
   pages={687--698},
   issn={0022-2518},
   review={\MR{0962929}},
   doi={10.1512/iumj.1988.37.37033},
}

%\bibitem{article12}J. Escobar, Uniqueness theorems on conformal deformation of metrics, Sobolev inequalities, and an eigenvalue estimate, Comm. Pure Appl. Math. 43 (1990), 857–883.

\bib{article12}{article}{
   author={Escobar, Jos\'e F.},
   title={Uniqueness theorems on conformal deformation of metrics, Sobolev
   inequalities, and an eigenvalue estimate},
   journal={Comm. Pure Appl. Math.},
   volume={43},
   date={1990},
   number={7},
   pages={857--883},
   issn={0010-3640},
   review={\MR{1072395}},
   doi={10.1002/cpa.3160430703},
}

%\bibitem{article13}J. Escobar, Conformal deformation of a Riemannian metric to a scalar flat metric with constant mean curvature on the boundary. Ann. of Math. (2) 136(1) (1992), 1–50.

\bib{article13}{article}{
   author={Escobar, Jos\'e F.},
   title={Conformal deformation of a Riemannian metric to a scalar flat
   metric with constant mean curvature on the boundary},
   journal={Ann. of Math. (2)},
   volume={136},
   date={1992},
   number={1},
   pages={1--50},
   issn={0003-486X},
   review={\MR{1173925}},
   doi={10.2307/2946545},
}

%\bibitem{article14}J. Escobar, The Yamabe problem on manifolds with boundary, J. Differential Geom. 35 (1992), 21–84.

\bib{article14}{article}{
   author={Escobar, Jos\'e F.},
   title={The Yamabe problem on manifolds with boundary},
   journal={J. Differential Geom.},
   volume={35},
   date={1992},
   number={1},
   pages={21--84},
   issn={0022-040X},
   review={\MR{1152225}},
}

%\bibitem{article15}J. Escobar, Conformal metrics with prescribed mean curvature on the boundary. Calc. Var 4,  (1996), 559–592. 

\bib{article15}{article}{
   author={Escobar, Jos\'e F.},
   title={Conformal metrics with prescribed mean curvature on the boundary},
   journal={Calc. Var. Partial Differential Equations},
   volume={4},
   date={1996},
   number={6},
   pages={559--592},
   issn={0944-2669},
   review={\MR{1416000}},
   doi={10.1007/BF01261763},
}

%\bibitem{article16}J. Escobar, Uniqueness and non-uniqueness of metrics with prescribed scalar and mean curvature on compact manifolds with boundary. J. Funct. Anal., Volume 202,(2003), 424-442.

\bib{article16}{article}{
   author={Escobar, Jos\'e F.},
   title={Uniqueness and non-uniqueness of metrics with prescribed scalar
   and mean curvature on compact manifolds with boundary},
   journal={J. Funct. Anal.},
   volume={202},
   date={2003},
   number={2},
   pages={424--442},
   issn={0022-1236},
   review={\MR{1990532}},
   doi={10.1016/S0022-1236(02)00054-X},
}

%\bibitem{article17}J. Escobar and G.Garcia, Conformal metrics on the ball with zero scalar curvature and prescribed mean curvature on the boundary. Journal of Functional Analysis, Volume 211,(2004), 71-152.

\bib{article17}{article}{
   author={Escobar, Jos\'e F.},
   author={Garcia, Gonzalo},
   title={Conformal metrics on the ball with zero scalar curvature and
   prescribed mean curvature on the boundary},
   journal={J. Funct. Anal.},
   volume={211},
   date={2004},
   number={1},
   pages={71--152},
   issn={0022-1236},
   review={\MR{2054619}},
   doi={10.1016/S0022-1236(03)00175-7},
}

%\bibitem{article18}J. Lee and T. Parker, The Yamabe problem. Bull. Amer. Math. Soc. (N.S.) 17(1)(1987), 37–91.

\bib{article18}{article}{
   author={Lee, John M.},
   author={Parker, Thomas H.},
   title={The Yamabe problem},
   journal={Bull. Amer. Math. Soc. (N.S.)},
   volume={17},
   date={1987},
   number={1},
   pages={37--91},
   issn={0273-0979},
   review={\MR{0888880}},
   doi={10.1090/S0273-0979-1987-15514-5},
}

%\bibitem{article19}F. Marques, Existence results for the Yamabe problem on manifolds with boundary. Indiana Univ. Math. J. 54(6) (2005), 1599–1620.

\bib{article19}{article}{
   author={Marques, Fernando C.},
   title={Existence results for the Yamabe problem on manifolds with
   boundary},
   journal={Indiana Univ. Math. J.},
   volume={54},
   date={2005},
   number={6},
   pages={1599--1620},
   issn={0022-2518},
   review={\MR{2189679}},
   doi={10.1512/iumj.2005.54.2590},
}

%\bibitem{article20}F. Marques, Conformal deformations to scalar-flat metrics with constant mean curvature on the boundary. Comm. Anal. Geom. 15(2) (2007), 381–405.

\bib{article20}{article}{
   author={Marques, Fernando C.},
   title={Conformal deformations to scalar-flat metrics with constant mean
   curvature on the boundary},
   journal={Comm. Anal. Geom.},
   volume={15},
   date={2007},
   number={2},
   pages={381--405},
   issn={1019-8385},
   review={\MR{2344328}},
}

%\bibitem{article21}R. Schoen, Conformal deformation of a Riemannian metric to constant scalar curvature. J. Differential Geom., 20(2) (1984), 479–495.

\bib{article21}{article}{
   author={Schoen, Richard},
   title={Conformal deformation of a Riemannian metric to constant scalar
   curvature},
   journal={J. Differential Geom.},
   volume={20},
   date={1984},
   number={2},
   pages={479--495},
   issn={0022-040X},
   review={\MR{0788292}},
}

%\bibitem{book22}R. Schoen and S.T. Yau, Lectures on Differential Geometry. International Press, Cambridge, MA (1994).

\bib{book22}{book}{
   author={Schoen, Richard},
   author={Yau, Shing-Tung},
   title={Lectures on differential geometry},
   series={Conference Proceedings and Lecture Notes in Geometry and
   Topology},
   volume={I},
   note={Lecture notes prepared by Wei Yue Ding, Kung Ching Chang [Gong Qing
   Zhang], Jia Qing Zhong and Yi Chao Xu;
   Translated from the Chinese by Ding and S. Y. Cheng;
   With a preface translated from the Chinese by Kaising Tso},
   publisher={International Press, Cambridge, MA},
   date={1994},
   pages={v+235},
   isbn={1-57146-012-8},
   review={\MR{1333601}},
}

%\bibitem{article23}R. Schoen and S.T. Yau, Positive scalar curvature and minimal hypersurface singularities. Surveys in Differential Geometry (2017), 441-480.

\bib{article23}{article}{
   author={Schoen, Richard},
   author={Yau, Shing-Tung},
   title={Positive scalar curvature and minimal hypersurface singularities},
   conference={
      title={Surveys in differential geometry 2019. Differential geometry,
      Calabi-Yau theory, and general relativity. Part 2},
   },
   book={
      series={Surv. Differ. Geom.},
      volume={24},
      publisher={Int. Press, Boston, MA},
   },
   isbn={978-1-57146-413-2},
   date={2022},
   pages={441--480},
   review={\MR{4479726}},
}

%\bibitem{article24}N.S. Trudinger, Remarks concerning the conformal deformation of Riemannian structures on compact manifolds. Ann. Scuola Norm. Sup. Pisa (3)(1968), 265–274.

\bib{article24}{article}{
   author={Trudinger, Neil S.},
   title={Remarks concerning the conformal deformation of Riemannian
   structures on compact manifolds},
   journal={Ann. Scuola Norm. Sup. Pisa Cl. Sci. (3)},
   volume={22},
   date={1968},
   pages={265--274},
   issn={0391-173X},
   review={\MR{0240748}},
}

%\bibitem{article25}X. Xu and H. Zhang, Conformal metrics on the unit ball with prescribed mean curvature. Math. Ann. 365(2016), no. 1–2, 497–557.

\bib{article25}{article}{
   author={Xu, Xingwang},
   author={Zhang, Hong},
   title={Conformal metrics on the unit ball with prescribed mean curvature},
   journal={Math. Ann.},
   volume={365},
   date={2016},
   number={1-2},
   pages={497--557},
   issn={0025-5831},
   review={\MR{3498920}},
   doi={10.1007/s00208-015-1291-z},
}

%\bibitem{article26}H. Yamabe, On a deformation of Riemannian structures on compact manifolds. Osaka Mathematical Journal, 12(1)(1960), 21-37.

%\bib{article26}{article}{
%   author={Yamabe, Hidehiko},
%   title={On a deformation of Riemannian structures on compact manifolds},
%   journal={Osaka Math. J.},
%   volume={12},
%   date={1960},
%   pages={21--37},
%   issn={0388-0699},
%   review={\MR{0125546}},
%}

\end{biblist}
\end{bibdiv}

\end{document}